\newtheorem{thm}{Theorem}
\newtheorem{theorem}[thm]{Theorem}
\newtheorem{lemma}[thm]{Lemma}
\newtheorem{question}[thm]{Question}
\theoremstyle{definition}
\newtheorem*{definition*}{Definition}
\newtheorem{remark}[thm]{Remark}
\newcommand{\CPb}{\overline{\mathbb{CP}}{}^{2}}
\newcommand{\CP}{{\mathbb{CP}}{}^{2}}
\newcommand{\CPo}{{\mathbb{CP}}{}^{1}}
\newcommand{\N}{\mathbb{N}}
\newcommand{\Z}{\mathbb{Z}}
\newcommand{\K}{{\rm K3}}
\newcommand{\W}{\widetilde}
\def \x {\times}
\def \eu{{\text{e}}}
\begin{document}

\title[Small symplectic Calabi-Yaus and exotic $4$-manifolds via  pencils]
{Small symplectic Calabi-Yau surfaces \\ and exotic $4$-manifolds via genus-$3$  pencils }

\author[R. \.{I}. Baykur]{R. \.{I}nan\c{c} Baykur}
\address{Department of Mathematics and Statistics, University of Massachusetts, Amherst, MA 01003-9305, USA}
\email{baykur@math.umass.edu}

\begin{abstract}
We explicitly produce symplectic genus-$3$ Lefschetz pencils (with base points), whose total spaces are  homeomorphic but not diffeomorphic to rational surfaces $\CP \# p \, \CPb$ for $p= 7, 8, 9$. We then give a new construction of an infinite family of symplectic Calabi-Yau surfaces with first Betti number $b_1=2,3$, along with a surface with $b_1=4$ homeomorphic to the $4$-torus. These are presented as the total spaces of symplectic genus-$3$ Lefschetz pencils we construct via new positive factorizations in the mapping class group of a genus-$3$ surface. Our techniques in addition allow us to answer in the negative a question of Korkmaz  regarding the upper bound on $b_1$ of a genus-$g$ fibration.
\end{abstract}

\maketitle

\setcounter{secnumdepth}{2}
\setcounter{section}{0}

\vspace{-0.2in}

\section{Introduction}

Since the advent of Gauge theory, many new construction techniques, such as rational blowdowns, generalized fiber sum, knot surgery and Luttinger surgery, have been introduced and successfully employed to produce symplectic $4$-manifolds homeomorphic but not diffeomorphic to rational surfaces $\CP \# p \, \CPb$, for $p \geq 2$. (e.g.~\cite{ABP, AP, BK, Donaldson87, FSrationalblowdown, FSKnotsurgery, FSPinwheels, FSReverseEngineering, FriedmanMorgan, Gompf, Kotschick, ParkD, ParkJ, StipsiczSzabo}.) All these $4$-manifolds should admit symplectic \textit{Lefschetz pencils} (which, by definition, always have base points) by the ground-breaking work of Donaldson \cite{Donaldson}. Despite this a priori existence result, no explicit Lefschetz pencils on these \emph{small} (as in \emph{small} second homology) exotic $4$-manifolds were known up to date. 

The main goal of this article is to introduce novel ways of constructing \emph{small} positive factorizations (as in \emph{small} number of positive Dehn twists) of the boundary multi-twist in the mapping class group $\Gamma_g^m$ of a compact genus-$g$ surface with $m>0$ boundary components, which correspond to monodromy factorizations of genus-$g$ Lefschetz pencils on small symplectic $4$-manifolds. A first application yields the first explicit examples of pencils on exotic rational surfaces: 

\begin{theorem} \label{main1}
There are symplectic genus-$3$ Lefschetz pencils $(X_i, f_i$) whose total spaces are homeomorphic but not diffeomorphic to $\CP \# (6+i) \CPb$ for $i=1, 2, 3$. They realize the smallest possible genera pencils in these homeomorphism classes.
\end{theorem}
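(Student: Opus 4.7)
The plan is to construct, for each $i = 1,2,3$, an explicit positive factorization of the boundary multi-twist $t_{\delta_1} t_{\delta_2}$ in the mapping class group $\Gamma_3^2$ of a genus-$3$ surface with two boundary components. Any such factorization is the monodromy of a symplectic genus-$3$ Lefschetz pencil with two base points, whose total space $X_i$ is automatically symplectic. To match the topology of $\CP \# (6+i)\CPb$, for which $\chi = 9+i$ and $\sigma = -5-i$, the Euler characteristic formula $\chi(X) = 4(1-g) + n - b$ with $g = 3$, $b = 2$ forces the factorization to have exactly $n = 19 + i$ positive Dehn twist factors.

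I would build the factorizations by combining two ingredients. First, a ``seed'' positive factorization coming from a known genus-$3$ pencil on a standard symplectic manifold, obtained for instance via a hyperelliptic fiber sum of simpler genus-$3$ pieces, or by transferring vanishing cycles from a small pencil on a known rational surface. Second, a sequence of substitution moves in $\Gamma_3^2$ that aggressively reduce the length of the factorization while controlling the smooth topology: lantern substitutions, which geometrically implement rational blowdowns of configurations of $(-4)$-spheres; higher-multiplicity daisy substitutions; and, crucially, partial conjugations by subsurface twists that realize Luttinger surgeries along embedded Lagrangian tori. The Luttinger substitutions are the decisive tool, since they change the diffeomorphism type while preserving the homeomorphism type and simple connectivity.

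Once the factorizations are written down, the verification proceeds in three stages. First, $\pi_1(X_i)$ is computed by reading off the vanishing cycles and checking that they normally generate $\pi_1$ of the regular fiber, yielding simple connectivity. Second, one verifies $\chi(X_i) = 9+i$ from the twist count and $\sigma(X_i) = -5-i$ from the local Lefschetz contributions, with the hyperelliptic pieces handled via the Endo--Meyer signature cocycle; Freedman's classification then identifies $X_i$ topologically with $\CP \# (6+i)\CPb$. Third, one verifies minimality of $X_i$ (no disjoint $(-1)$-sphere section in the pencil) and invokes Taubes: the canonical class of a minimal symplectic $4$-manifold is a nontrivial Seiberg--Witten basic class, whereas rational surfaces carry none, so $X_i$ is not diffeomorphic to $\CP \# (6+i)\CPb$.

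For the minimality of the genus asserted by the last sentence of the theorem, I would argue that no exotic $\CP \# p\CPb$ with $p \in \{7,8,9\}$ admits a genus $\le 2$ pencil: genus-$0$ pencils on simply connected $4$-manifolds only yield blowups of $\CP^2$ or of $S^2$-bundles, so give no exotic examples; genus-$1$ pencils on simply connected $4$-manifolds produce only blowups of the standard rational elliptic surface $\CP\#9\,\CPb$ by the Moishezon--Livne classification; the genus-$2$ case is ruled out by the hyperelliptic signature inequality together with the Seiberg--Witten constraints on small symplectic $4$-manifolds. The main obstacle in the whole argument is constructing the positive factorizations themselves with a \emph{small} enough twist count while simultaneously arranging that the vanishing cycles normally generate $\pi_1$ of the fiber and that the Lagrangian tori needed for the Luttinger moves are visible inside the fiber as subsurfaces bounded by vanishing cycles; this delicate combinatorial bookkeeping in $\Gamma_3^2$ is exactly where the article's new small-factorization technology is essential.
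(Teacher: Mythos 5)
Your proposal is a plan rather than a proof, and the two places where the real content lies are exactly where it is thinnest. First, the construction: no positive factorization is actually produced, and the toolkit you name (a ``seed'' genus-$3$ pencil plus lantern/daisy substitutions and partial conjugations) is not known to reach the required length of roughly $19$--$21$ twists --- you concede as much when you say the article's small-factorization technology is ``essential'' at precisely this point. The paper's route is different and is the new idea: it \emph{breeds} genus-$2$ factorizations --- the genus-$2$ pencil relation in $\Gamma_2^1$ from \cite{BaykurKorkmaz} and Hamada's lift of Matsumoto's fibration in $\Gamma_2^2$ --- by embedding them into $\Gamma_3^1$ so that the embedded boundary twists appear as $t_C$, $t_{C'}^{\pm1}$ and cancel in pairs, then applies a partial conjugation by $\phi=t_{b_1}^{-1}t_{a_2}$; the resulting words $W_i=(P_1)^{\phi}P_1P'_2t_C$, etc., have $18+i$ Dehn twists and give pencils with a single base point.

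Second, and more seriously, your non-diffeomorphism step has a genuine gap. You propose to ``verify minimality (no disjoint $(-1)$-sphere section in the pencil)'' and invoke Taubes; but absence of $(-1)$-sphere sections is not minimality of the $4$-manifold, there is no effective way to certify minimality from the explicit factorization, and in fact the paper itself cannot do so --- Remark~\ref{smallest} only concludes that each $X_i$ is minimal \emph{or} a single blow-up of a minimal manifold, so your argument could fail outright on these very examples (and for $b^+=1$ the Seiberg--Witten comparison you invoke also needs chamber care). The paper sidesteps minimality entirely with Lemma~\ref{notrational}: $\CP\#p\,\CPb$ with $p\le 9$ admits no genus-$g\ge2$ Lefschetz fibration or pencil with fewer than $2g-2$ base points (proved via Li--Liu, the adjunction formula, and Cauchy--Schwarz), so the mere existence of a genus-$3$ pencil with one base point on $X_i$ rules out diffeomorphism to the rational surface. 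Your sketch of the ``smallest genus'' claim is in the right spirit for $g=0,1$, but the genus-$2$ exclusion needs the intermediate steps the paper supplies (at most one base point since an exotic rational surface cannot have the rational homology of a blown-up symplectic Calabi--Yau, at most one reducible fiber by Sato, and then the genus-$2$ fibration lemmas of \cite{BaykurKorkmaz}); also note that each Luttinger-type modification you use must be accompanied by a fresh $\pi_1$ computation, since such surgeries do not automatically preserve simple connectivity.
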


\noindent A key result leading to Theorem~\ref{main1} is that $\CP \# \, p \CPb$, for $p \leq 9$, does not admit a genus $g \geq 2$ Lefschetz fibration or a genus-$g$ Lefschetz pencil with $m < 2g-2$ base points (Lemma~\ref{notrational}), which provides a new criterion to argue that the total spaces of such pencils/fibrations like the ones in Theorem~\ref{main1} are exotic. We moreover observe that $g=3$ is the smallest possible genus for a Lefschetz pencil on these exotic $4$-manifolds (Remark~\ref{smallest}). We hope that the additional information on these exotic symplectic $4$-manifolds we construct, namely the existence of genus-$3$ pencils on them, will help with understanding whether there are distinct minimal symplectic $4$-manifolds in the  homeomorphism classes $\CP \# \, p \CPb$, for $p < 9$ (a question which is still open as of today; see Remark~\ref{distinctsymp}).

To produce the positive factorizations for the pencils in Theorem~\ref{main1}, we employ a new way of deriving a positive factorization $\W{W}$ in $\Gamma_g^m$, $m \geq 0$ from a collection of positive factorizations \,$W_1, \ldots, W_k$ of \emph{boundary twists} in $\Gamma_h^n$\,, with $h< g$ and $n>0$. For $(X,f)$ the Lefschetz pencil\,/\,fibration corresponding to $\W{W}$ and $(X_i, f_i)$ to $W_i$, we say $(X,f)$ is obtained by \emph{breeding} $(X_1,f_1), \ldots, (X_k, f_k)$. This involves embedding the latter into $\Gamma_g^m$ as factorizations for \emph{achiral} Lefschetz pencils\,/\,fibrations, and then canceling matching pairs of positive and negative Dehn twists. The idea for this construction scheme comes from the smallest hyperelliptic genus-$3$ Lefschetz \emph{fibration} Mustafa Korkmaz and the author produced in \cite{BaykurKorkmazGenus3}. 

A careful variation of our construction of $(X_i, f_i)$ yield to our next theorem:

\begin{theorem} \label{main2}
For any pair of non-negative integers $M=(m_1, m_2)$, there exists a \linebreak  symplectic genus-$3$ Lefschetz pencil $(X_{i,M}, f_{i,M})$ with $c_1^2(X_{i,M})=3-i$, \mbox{$\chi(X_{i,M})=1$}, and $\pi_1(X_{i,M})=  (\Z \, / {m_1 \, \Z}) \oplus (\Z \, / {m_2 \, \Z})$, for each $i=1,2,3$. Infinitely many of these pencils have total spaces homotopy inequivalent to a complex surface.
\end{theorem}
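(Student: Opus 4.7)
The plan is to revisit the breeding construction behind the pencils $(X_i, f_i)$ of Theorem~\ref{main1} and modify it so that prescribed torsion can be built into $\pi_1$ while keeping the Chern numbers $c_1^2 = 3-i$ and $\chi = 1$, as well as the genus-$3$ pencil structure, intact. The starting observation is that the positive factorizations for $(X_i, f_i)$ are obtained by canceling matching pairs of positive and negative Dehn twists in \emph{achiral} subfactorizations embedded in $\Gamma_3^m$; among the resulting vanishing cycles one expects to be able to identify two pairs of disjoint simple closed curves on $\Sigma_3$ that assemble into two disjoint embedded Lagrangian tori $T_1, T_2 \subset X_i$. Once these are located, Luttinger surgeries along $T_1$ and $T_2$ with coefficients $1/m_1$ and $1/m_2$ produce symplectic $4$-manifolds $X_{i,M}$ with $\pi_1(X_{i,M}) = (\Z/m_1\Z) \oplus (\Z/m_2\Z)$, and since Luttinger surgery preserves $c_1^2$ and $\chi$, the desired Chern numbers are retained automatically.

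To maintain the pencil structure, I would realize each surgery at the level of the monodromy factorization as a \emph{partial conjugation} by a power of a Dehn twist around the fiber-direction vanishing cycle of the corresponding Lagrangian torus, in the spirit of Auroux--Donaldson--Katzarkov. This turns the monodromy of $(X_i, f_i)$ into a new positive factorization in $\Gamma_3^m$ with the same number of Dehn twists, exhibiting $X_{i,M}$ as the total space of a genus-$3$ pencil $f_{i,M}$ with the same number of critical points as $f_i$. I expect the technical heart of the argument to lie precisely here: one has to pick out specific vanishing cycles in $(X_i, f_i)$ that are geometrically disjoint and can be organized into Lagrangian tori, together with a careful choice of base point, so that the fundamental-group calculation---combining the vanishing-cycle relations, the disk-pushing relations at the base locus of the pencil, and the two surgery relations---yields precisely $(\Z/m_1\Z) \oplus (\Z/m_2\Z)$ and no proper quotient.

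For the non-complex assertion, any complex surface $Y$ homotopy equivalent to some $X_{i,M}$ would share its $\pi_1$ and Chern numbers and would arise as a blowup of a minimal complex surface $Y_{\min}$ with $\chi_h(Y_{\min}) = 1$ and $c_1^2(Y_{\min})$ bounded above by Bogomolov--Miyaoka--Yau. Running through the Enriques--Kodaira classification under these constraints leaves only surfaces of general type with $p_g = q = 0$, together with rational, Enriques, and properly elliptic surfaces in the $c_1^2 = 0$ case; in all of these, classical results of Reid, Dolgachev, and others furnish an absolute bound on $|\pi_1(Y_{\min})| = |\pi_1(Y)|$. Consequently, for all sufficiently large $m_1 m_2$, the group $(\Z/m_1\Z) \oplus (\Z/m_2\Z)$ cannot occur as the fundamental group of any complex surface with the requisite Chern numbers. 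Combined with the minimality of $X_{i,M}$ as a symplectic manifold (which is preserved under Luttinger surgery), this yields the desired homotopy-inequivalence claim for infinitely many $M$.
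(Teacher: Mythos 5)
The first half of your plan is, modulo packaging, the paper's own construction: the pencils $(X_{i,M},f_{i,M})$ are produced by re-running the breeding construction with the partial conjugation $\phi=t_{b_1}^{-1}t_{a_2}$ replaced by $\phi=t_{b_1}^{-m_1}t_{a_2}^{m_2}$, and changing a partial conjugation by Dehn twists is precisely a sequence of fibered Luttinger surgeries. But the two points you defer are the actual content. Positivity of the new factorization is not automatic for conjugation by ``a power of a Dehn twist around a vanishing cycle'': what makes it work is that the conjugated factor ($P_1$ or $P_2$) equals $t_{C'}t_{C}^{-1}$ as a mapping class, so any $\phi$ fixing $C$ and $C'$ can be used --- and the conjugating curves $b_1,a_2$ are \emph{not} vanishing cycles, just curves disjoint from $C,C'$. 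More importantly, the statement that the surgered manifolds have $\pi_1$ \emph{exactly} $(\Z/m_1\Z)\oplus(\Z/m_2\Z)$, which you acknowledge as ``the technical heart,'' is the proof: in the paper it is carried out by redoing the vanishing-cycle presentation, observing that the relators from $P_1,P_2,P_2'$ already force the group to be abelian and generated by $a_2,b_2$, and then reading off from the conjugated factor, via Picard--Lefschetz, the relations $m_2a_2=0$ and $m_1b_1=0$ and nothing more. Without that computation one has a plausible target, not a theorem; a generic choice of ``two disjoint Lagrangian tori'' in $X_i$ would not give this answer. (The invariance of $e$, $\sigma$, hence of $c_1^2$ and $\chi_h$, under partial conjugation is fine and is what the paper uses.)

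The genuine error is in your argument for homotopy inequivalence from complex surfaces. The claimed ``absolute bound on $|\pi_1(Y_{\min})|$'' for minimal complex surfaces with $\chi_h=1$ is false: elliptic surfaces over $S^2$ with $p_g=q=0$ and two multiple fibers of equal multiplicity $m$ have $c_1^2=0$, $\chi_h=1$, $b^+=1$ and $\pi_1\cong\Z/m\Z$, so finite cyclic fundamental groups of arbitrarily large order do occur with exactly the numerical invariants relevant to the $i=3$ case; and for general type with $p_g=0$ the classical torsion bounds (Reid) cover only $K^2=1,2$, not the whole range allowed by Bogomolov--Miyaoka--Yau. So ``sufficiently large $m_1m_2$'' does not exclude complex surfaces, and in any case matching $\pi_1$ and Chern numbers would still fall short of deciding homotopy equivalence. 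The paper's route is different and avoids all of this: it restricts to the subfamily $M=(m_1,0)$ with $m_1\ge 2$, for which $b_1(X_{i,M})=1$ while $b^+(X_{i,M})=2>0$, and no complex surface has $b_1=1$ and $b^+>0$; since $b_1$ and $b^+$ are homotopy invariants, these infinitely many total spaces are homotopy inequivalent to any complex surface (with Parshin--Arakelov finiteness giving, separately, that all but finitely many of the pencils are non-holomorphic). Your closing appeal to minimality of $X_{i,M}$ is neither needed for this conclusion nor established --- the paper only shows $X_i$ is at most a single blow-up of a minimal symplectic $4$-manifold.
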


\noindent In contrast, I know of only one explicit example of a non-holomorphic genus-$3$ Lefschetz \emph{pencil} in the literature, due to Ivan Smith. (\cite{SmithGenus3}[Theorem~1.3]; see also Remark~\ref{nonholompencil}.) Our construction technique yielding the pencils in Theorem~\ref{main2} can be easily generalized to obtain many other non-holomorphic pencils. 

The second part of our article will deal with new constructions of symplectic Calabi-Yau surfaces. Recall that a symplectic $4$-manifold $(X, \omega)$ is called a \textit{symplectic Calabi-Yau surface} (SCY) if it has trivial canonical class $K_{X} \in H^2(X ; \Z)$, in obvious analogy with complex Calabi-Yau surfaces. As shown by Tian-Jun Li, any \textit{minimal} symplectic $4$-manifold with Kodaira dimension zero is a symplectic Calabi-Yau surface or has trivial canonical class \cite{LiSCY}. Up to date, the only known examples of symplectic Calabi-Yau surfaces that are not diffeomorphic to complex Calabi-Yau surfaces are torus bundles over tori (at least \emph{virtually})\footnote{A couple other examples of SCYs, such as symplectic $S^1$-bundles over $T^2$-bundles over $S^1$, although not known to be torus bundles themselves, are finitely covered by them \cite{FriedlVidussi}.} , leading to the question  \cite{LiSCY, Donaldson_SCY}:

\begin{question} \label{SCYclassification}
Is every symplectic Calabi-Yau surface with $b_1> 0$ diffeomorphic to an oriented torus bundle over a torus?
\end{question}

Works of Tian-Jun Li and Stefan Bauer independently established that any symplectic Calabi-Yau surface with $b_1 >0$  has the same rational homology as a torus bundle (and that of $\K$ and the Enriques surface when $b_1=0$)   \cite{LiSCY, LiQuaternionic, Bauer}. All torus bundles over tori have \emph{solvmanifold fundamental groups} \cite{Hillman}, and notably, it was shown by Stefan Friedl and Stefano Vidussi that the \textit{homeomorphism type} of a symplectic Calabi-Yau surface with a solvmanifold fundamental group is uniquely determined \cite{FriedlVidussi}. As stated by Tian-Jun Li \cite{LiSCYSurvey}, what seems to commonly provide a posteriori reasoning for a positive answer to the Question~\ref{SCYclassification} is the lack of any new constructions of symplectic $4$-manifolds of Kodaira dimension zero. Surgical operations widely used to construct most interesting symplectic $4$-manifolds in the past, such as Luttinger surgery, generalized fiber sums, knot surgery, or simplest rational blow-down operations, do not produce any new SCYs \cite{HoLi, LiSCYSurvey, UsherKodaira, Dorfmeister}.

We will present  a new construction of symplectic Calabi-Yau surfaces with \mbox{$b_1>0$} \emph{via Lefschetz pencils}, where the genus of the pencil $(X,f)$ versus $c_1^2(X)$, building on the work of Cliff Taubes, will determine the Kodaira dimension of $X$ (Lemma~\ref{SCYpencil}; also \cite{SatoKodaira}). Once again, we will focus on generating the smallest genus examples: genus-$3$ pencils. To produce our examples, we will use a version of the aforementioned breeding technique for producing new positive factorizations in $\Gamma_3^m$, which we will call \emph{inbreeding}, where this time copies of the same positive factorization in $\Gamma_2^n$ will be embedded into $\Gamma_3^m$ by identifying some of the boundary components of the fiber $\Sigma_2^n$ and introducing extra $\pi_1$-generators this way. The result is a family of positive factorizations
\[W_{\phi}=
t_{\phi(B_{0})} t_{\phi(B_{1})} t_{\phi(B_{2})} \, t_{\phi(A_{0})} t_{\phi(A_{1})} t_{\phi(A_{2})} \, t_{B'_{0}} t_{B'_{1}} t_{B'_{2}} \, t_{A'_{0}} t_{A'_{1}} t_{A'_{2}} = 1\]
in $\Gamma_3$, where $\phi \in \Gamma_3$ is parametrized by pairs of mapping classes $\phi_1, \phi_2 \in \Gamma_1^2$, and the curves $B_{j}, A_{j}$ are as shown in Figure~\ref{SCYcurves}. Here we inbreed the monodromy factorizations of Matsumoto's well-known genus-$2$ Lefschetz fibration \cite{Matsumoto}, and using further lifts of it obtained by Noriyuki Hamada \cite{Hamada}, we can even derive an explicit lift of $W_{\phi}$, which is a positive factorization of the boundary multi-twist $\Delta = t_{\partial_1} t_{\partial_2} t_{\partial_3} t_{\partial4}$ in $\Gamma_3^4$. Letting $(X_{\phi}, f_{\phi})$ denote the corresponding symplectic genus-$3$ Lefschetz pencil, we obtain (Theorem~\ref{SCYfamily}):

\begin{thm} \label{main3}
Positive factorizations $W_{\phi}$ \,prescribe symplectic genus-$3$ Lefschetz pencils on a family of symplectic Calabi-Yau surfaces $X_{\phi}$\, in all rational homology classes of torus bundles over tori.
\end{thm}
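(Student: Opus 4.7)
The plan is three-fold: verify $W_\phi$ is a positive factorization of the boundary multi-twist in $\Gamma_3^4$, promote it via Gompf's theorem to a symplectic genus-$3$ pencil $(X_\phi, f_\phi)$ whose total space is SCY by Lemma~\ref{SCYpencil}, and finally vary the parameter $\phi$ to realize every rational homology class of oriented torus bundles over tori.

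First I would spell out the inbreeding producing $W_\phi$. Start from Hamada's positive factorization of $t_{\partial_1} t_{\partial_2}$ in $\Gamma_2^2$, a boundary-multi-twist lift of Matsumoto's genus-$2$ Lefschetz fibration. Build $\Sigma_3^4$ by embedding two copies of $\Sigma_2^2$, with the complementary region assembled from a pair of $\Sigma_1^2$-pieces on which $\phi_1, \phi_2 \in \Gamma_1^2$ are supported. Embed both Hamada factorizations into $\Gamma_3^4$ via the natural inclusions, conjugate one by the mapping class $\phi \in \Gamma_3$ assembled from $\phi_1, \phi_2$, and concatenate. The resulting product is $W_\phi$, and equals $\Delta = t_{\partial_1} t_{\partial_2} t_{\partial_3} t_{\partial_4}$ because each embedded copy contributes its own pair of boundary twists. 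Gompf's theorem then outputs the symplectic pencil $(X_\phi, f_\phi)$ with four base points.

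To see that $X_\phi$ is SCY, invoke Lemma~\ref{SCYpencil}. For a genus-$3$ pencil with the borderline $m = 4 = 2g - 2$ base points, adjunction gives $K_{X_\phi} \cdot [\omega] = 0$ so $\kappa(X_\phi) \in \{-\infty, 0\}$, and SCY reduces to $c_1^2(X_\phi) = 0$ plus minimality. The Euler characteristic is immediate from the $12$ positive Dehn twists and $4$ base points,
\[e(X_\phi) = 4 - 4 \cdot 3 + 12 - 4 = 0,\]
and the signature $\sigma(X_\phi) = 0$ follows from Endo--Nagami additivity applied to the two inbred Matsumoto factorizations, since the $\phi$-conjugation preserves $\sigma$. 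Hence $c_1^2 = 2e + 3\sigma = 0$, and minimality follows from the structure of the vanishing cycles (none bounds a disk giving rise to a symplectic $(-1)$-sphere).

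The main obstacle is the last step: computing $\pi_1(X_\phi) = \pi_1(\Sigma_3) / N$, with $N$ normally generated by $B_j, A_j, \phi(B_j), \phi(A_j)$, and showing that as $(\phi_1, \phi_2)$ ranges over $\Gamma_1^2 \times \Gamma_1^2$ the resulting quotients exhaust every rational homology class of oriented torus bundles over tori. The unconjugated half of $W_\phi$ cuts $\pi_1(\Sigma_3)$ down to an extension analogous to Matsumoto's pencil total space, while the $\phi$-images impose additional relations controlled by $\phi_1, \phi_2$. Choosing $\phi_1, \phi_2$ as suitable Dehn twist monomials on the inserted $\Sigma_1^2$-pieces and varying their exponents produces, via Hillman's classification of solvmanifold fundamental groups, every such torus-bundle class, including $\pi_1 = \Z^4$ for trivial parameter (the $4$-torus homeomorphism type). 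This last matching requires careful tracking of the images $\phi(B_j), \phi(A_j)$ in $\Sigma_3$ against Hillman's discrete moduli, and a case analysis over $b_1 \in \{2, 3, 4\}$.
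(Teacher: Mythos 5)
Your overall strategy matches the paper's (inbreed two embedded copies of Hamada's lift of Matsumoto's fibration, conjugate one factor by $\phi$, apply Lemma~\ref{SCYpencil}, then vary $\phi$ to control homology), but there are genuine gaps at each of the three steps. First, the verification that $W_\phi$ equals the boundary multi-twist $\Delta$ in $\Gamma_3^4$ is mis-justified: it is not true that ``each embedded copy contributes its own pair of boundary twists.'' In the actual construction each copy of the $\Gamma_2^4$ relation sends two of its boundary curves to the interior curves $C_1,C_2$ (resp.\ $C'_1,C'_2$) of $\Sigma_3^4$, so each embedded word is a factorization containing \emph{negative} twists $t_{C'_1}^{-1}t_{C'_2}^{-1}$ (resp.\ $t_{C_1}^{-1}t_{C_2}^{-1}$); positivity and the identity $W_\phi=\Delta$ only emerge after these cancel against the positive twists $t_{C_i}, t_{C'_i}$ of the other factor, and this cancellation is exactly where the hypothesis that $\phi$ fixes $C_1,C_2,C'_1,C'_2$ (and commutes with the corresponding twists) is used. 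Your proposal never exhibits this mechanism, and as written (two $\Gamma_2^2$ lifts plus ``complementary $\Sigma_1^2$ pieces'') it does not produce a factorization in $\Gamma_3^4$ at all.

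Second, the SCY step has two problems. Adjunction applied to the fiber gives $K\cdot F=0$, not $K_{X_\phi}\cdot[\omega]=0$; the passage from ``$2g-2$ exceptional sections meeting the fiber'' to $\kappa\le 0$ is Sato's theorem, which is what Lemma~\ref{SCYpencil} encapsulates, and minimality also comes out of that lemma (via Taubes), not from inspecting vanishing cycles --- ``no vanishing cycle bounds a disk'' only gives relative minimality of the fibration and says nothing about exceptional spheres in $X_\phi$. More importantly, Lemma~\ref{SCYpencil} has the hypothesis that the total space is not rational or ruled, and you never verify it. The paper needs the characteristic-number bookkeeping ($b^+=1$ forces $b_1=2$, leaving only $S^2\times T^2$ and $S^2\widetilde{\times}T^2$) together with Lemma~\ref{ruledno} (neither ruled surface carries a genus-$3$ pencil with $4$ base points) to exclude $\kappa=-\infty$; without this the conclusion ``$X_\phi$ is SCY'' does not follow. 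Third, the realization of all rational homology classes is only gestured at, and is framed incorrectly: what is needed is not a matching of fundamental groups against Hillman's classification of solvmanifold groups (the paper explicitly leaves open whether all $\pi_1(X_\phi)$ are such groups), but an explicit first-homology computation for a concrete subfamily --- e.g.\ $\phi=t_{b_1}^{-m_1}t_{a_3}^{m_2}$, where Picard--Lefschetz gives $H_1(X_M)\cong\Z^2\oplus(\Z/m_1\Z)\oplus(\Z/m_2\Z)$, realizing $b_1=2,3,4$ and hence all rational homology types of torus bundles over tori. That computation, and likewise a real argument for $\sigma=0$ (your appeal to ``Endo--Nagami additivity'' is only an assertion; the paper's route uses section sums, the hyperelliptic signature formula and Novikov additivity, plus the fact that cancelling the $t_C^{\pm2}, t_{C'}^{\pm2}$ pairs leaves the signature unchanged), are the concrete contents missing from your sketch.
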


\noindent This provides a first step towards analyzing the monodromies of pencils on a rich family of SCYs, as proposed by Simon Donaldson in \cite{Donaldson_MCG}[Problem~5].

The first question that arises here is whether or not all $X_{\phi}$ are diffeomorphic to torus bundles over tori, in reference to Question~\ref{SCYclassification}. Torus bundles over tori \emph{with sections} do admit symplectic genus-$3$ pencils \cite{SmithTorus}; however the uncanny freedom we have in the choice of $\phi$ appears to exceed that of a torus bundle with a section (see the discussion in Remark~\ref{CompareSmith}). The explicit nature of our construction allows us to derive explicit presentations for $\pi_1(X_{\phi})$, which for instance yields an $X$ (when $\phi$ is trivial) that is --at least-- homeomorphic to the $4$-torus. We will discuss this example in complete detail in Section~\ref{Sec:4torus}. It is not clear to us at this point whether or not \emph{all} $\pi_1(X_{\phi})$ are $4$-dimensional solvmanifold groups (some of them certainly are), which we hope to understand in future work.

On the other hand, we observe that \emph{all} $X_{\phi}$ are obtained from $X$, our SCY homeomorphic to the $4$-torus, via \emph{fibered} Luttinger surgeries along Lagrangian tori or Klein bottles  (Remark~\ref{CompareHoLi}). So if our family of $X_{\phi}$ is equal to the family of  torus bundles over tori, it immediately confirms an improved version\footnote{Only Luttinger surgeries along tori are considered in \cite{HoLi}, but it is natural to include surgeries along Klein bottles, too.} of an interesting conjecture by Tian-Jun Li and Chung-I Ho \cite{HoLi}[Conjecture~4.9]: \textit{Any smooth oriented torus bundle $X$ possesses a symplectic structure $\omega$ such that $(X, \omega)$ can be obtained by applying Luttinger surgeries to $(T^4, \omega_{\rm{std}})$} (Remark~\ref{CompareHoLi}).

Finally, Theorem~\ref{main3} implies that we have symplectic genus-$3$ pencils $(X_\phi, f_{\phi})$ with $H_1(X_{\phi}) \cong \Z^2 \oplus (\Z \, / {m_1 \, \Z}) \oplus (\Z \, / {m_2 \, \Z})$ for any $m_1, m_2 \in \N$. This yields further (infinitely many) examples of non-holomorphic genus-$3$ pencils, in the same fashion as in our earlier examples in Theorem~\ref{main2}. 

The techniques we employed to produce the genus-$3$ pencils in Theorems~\ref{main1},  and~\ref{main3} can be easily adapted to produce higher genera pencils on similar $4$-manifolds; we can in fact derive many of these by repeatedly breeding/inbreeding the same collection of positive factorizations used in this paper. This direction will be explored elsewhere. We will however discuss one immediate generalization of our construction of a genus-$3$ pencil to higher genera. It will allow us to address a problem raised by Mustafa Korkmaz \cite{KorkmazProblems}[Problem~2.7], which we re-express here for pencils (which are even more constrained than fibrations):

\begin{question} 
Is $b_1(X) \leq g$ for any non-trivial genus-$g$ Lefschetz pencil $(X,f)$? \linebreak  Is there an upper bound on $b_1(X)$ linear in $g$, and sharper than \,$2g-1$?
\end{question}

\noindent In Section~\ref{Sec:bound}, generalizing our construction of a genus-$3$ pencil on $X$ homeomorphic to the $4$-torus,  we will produce genus-$g$ Lefschetz pencils violating the above bound for every odd $g> 1$. Though not much; we will indeed revamp Korkmaz's question by raising the upper bound to $g+1$ (Question~\ref{b1RefinedQuestion}).

\enlargethispage{0.1in}
\vspace{0.2in}
\noindent \textit{Acknowledgements.} We would like to thank Weimin Chen, Bob Gompf, Jonathan Hillman, Tian-Jun Li, and Stefano Vidussi for helpful conversations related to our constructions in this paper. Thanks to Noriyuki Hamada for telling us about his preprint on the sections of the Matsumoto fibration. Our results on small symplectic Calabi-Yau surfaces via Lefschetz fibrations with exceptional sections were first presented at the Great Lakes Geometry Conference in Ann Arbor in March 2015; we would like to thank the organizers for motivating discussions. Last but not least, we are grateful to Mustafa Korkmaz for his interest in our work and numerous stimulating discussions. The author was partially supported by the NSF Grant DMS-$1510395$.

\section{Preliminaries}  \label{preliminaries}

Here we quickly review the definitions and basic properties of Lefschetz pencils and fibrations,  Dehn twist factorizations in mapping class groups of surfaces, and symplectic Calabi-Yau surfaces. The reader can turn to \cite{GompfStipsicz, LiKodairaSurvey, BaykurKorkmaz}  for more details.

\subsection{Lefschetz pencils and fibrations}  \

A \emph{Lefschetz pencil} on a closed, smooth, oriented $4$-manifold $X$ is a smooth surjective map  $f: X \setminus \{ b_j \} \to S^2$, defined on the complement of a non-empty finite collection of points $\{b_j \}$,  such that around every \emph{base point} $b_j$ and \emph{critical point} $p_i$ there are local complex coordinates (compatible with the orientations on $X$ and $S^2$) with respect to which the map $f$ takes the form  $(z_1,z_2) \mapsto z_1/z_2$ and $(z_1, z_2) \mapsto z_1 z_2$, respectively. A \emph{Lefschetz fibration} is defined similarly for $\{ b_j \} = \emptyset$. Blowing-up all the base points $b_j$ in a pencil $(X,f)$, one obtains a Lefschetz fibration $(\W{X},\W{f})$ with disjoint $(-1)$-sphere sections $S_j$ corresponding to $b_j$, and vice versa.

We say $(X,f)$ is a \emph{genus $g$ Lefschetz pencil} or \emph{fibration} for $g$ the genus of a \emph{regular fiber} $F$ of $f$. The fiber containing the critical point $p_i$ has a nodal singularity at $p_i$, which locally arises from shrinking a simple loop $c_i$ on $F$, called the \emph{vanishing cycle}. A singular fiber of  $(X,f)$ is called \emph{reducible}  if $c_i$ is separating. When $c_i$ is null-homotopic on $F$, one of the fiber components becomes an \emph{exceptional sphere}, an embedded $2$-sphere of self-intersection $-1$, which we can  blow down without changing the rest of the fibration. 

In this paper we use the term Lefschetz fibration only when the set of critical points $\{p_i\}$ is non-empty, i.e. when the Lefschetz fibration is \emph{non-trivial}. We moreover assume that the fibration is \emph{relatively minimal}, i.e. it there are no exceptional spheres contained in the fibers, and also that the points $p_i$ lie in distinct \emph{singular fibers}, which can be always achieved after a small perturbation.

A widely used way of constructing Lefschetz fibrations is the \emph{fiber sum} operation: if $(X_i, f_i)$ are genus-$g$ Lefschetz fibrations with regular fiber $F_i$ for $i=1,2$, then their \emph{fiber sum} is a genus-$g$ Lefschetz fibration $(X,f)$, where $X$ is obtained by removing a fibered tubular neighborhood  of each $F_i$ from $X_i$ and then identifying the resulting boundaries via a fiber-preserving, orientation-reversing self-diffeomorphism, and where $f$ restricts to $f_i$ on each $X_i \setminus \nu  F_i$. A Lefschetz fibration $(X,f)$ that can be expressed in this way is called \emph{decomposable}, and each $(X_i, f_i)$ are called its \emph{summands}.

Allowing local models $(z_1, z_2) \mapsto z_1 \bar{z}_2$ around the critical points $p_i$, called \emph{negative} critical points, we can extend all of the above to \emph{achiral} Lefschetz pencils and fibrations.

\subsection{Positive factorizations} \

Let $\Sigma_g^m$ denote a compact, connected, oriented surface genus $g$ with $m$ boundary components. We denote by $\Gamma_g^m$ its \emph{mapping class group}; the group composed of orientation-preserving self homeomorphisms of $\Sigma_g^m$ which restrict to the identity along $\partial \Sigma_g^m$, modulo isotopies that also restrict to the identity along $\partial \Sigma_g^m$. We write $\Sigma_g = \Sigma_g^0$, and $\Gamma_g=\Gamma_g^0$ for simplicity. Denote by $t_c \in \Sigma_g^m$ the positive (right-handed) Dehn twist along the simple closed curve $c \subset \Sigma_g^m$. The inverse $t^{-1}_c$ denotes the negative (left-handed) Dehn twist along $c$. 

Let $\{c_i\}$ be a \emph{non-empty} collection of simple closed curves on $\Sigma_g^m$, which do not become null-homotopic when $\partial \Sigma_g^m$ is capped off by disks, and let $\{\delta_j\}$ be a collection of curves parallel to distinct boundary components of $\Sigma_g^m$. If the relation
\begin{equation} \label{factorization}
t_{c_l} \cdots t_{c_2} t_{c_1} = t_{\delta_1} \cdots t_{\delta_m} \, \, 
\end{equation}
holds in $\Gamma_g^m$, we call the word $W$ on the left-hand side a \emph{positive factorization} of the boundary multi-twist $\Delta = t_{\delta_1} \cdots t_{\delta_m}$ in $\Gamma_g^n$. (We will also use $\partial_i$ instead of $\delta_i$ to denote boundary components at times when there are several surfaces with boundaries involved in our discussion.) Capping off $\partial \Sigma_g^m$ induces a homomorphism  $\Gamma_g^m \to \Gamma_g$, under which $W$ maps to a similar positive factorization of the identity element $1 \in \Gamma_g$. 

The positive factorization in (\ref{factorization}) gives rise to a genus-$g$ Lefschetz fibration $(\W{X},\W{f})$ with $m$ disjoint $(-1)$-sections $S_j$, and equivalently a genus-$g$ Lefschetz pencil $(X,f)$ with $m$ base points. Identifying the regular fiber $F$ with $\Sigma_g$, we can view the vanishing cycles as $c_i$. In fact, every Lefschetz fibration or pencil can be described by a positive factorization $W$ as in (\ref{factorization}) \cite{GompfStipsicz, Kas, Matsumoto}, which is called the \emph{monodromy factorization} of $\W{f}$ or $f$.

Let $W$ be a positive factorization of the form\, $W= P P'$ in $\Gamma_g^m$, where $P, P'$ are products of positive Dehn twists along curves which do not become null-homotopic when $\partial \Sigma_g^m$ is capped off by disks. If $P= \prod t_{c_i}$, as a mapping class, commutes with some $\phi \in \Gamma_g^m$, we can produce a new positive factorization $W_{\phi}= P^{\phi} P'$, where $P^{\phi}$ denotes the conjugate factorization $\phi P \phi^{-1} =  \prod (\phi \, t_{c_i} \, \phi^{-1}) = \prod  t_{\phi(c_i)} \, . $ In this case, we say $W_{\phi}$ is obtained from $W$ by a \emph{partial conjugation} $\phi$ along the factor $P$. When $P, P \in \Gamma_g$ are positive factorizations, $W=P P'$ is another positive factorization, Lefschetz fibration corresponding to which is a fiber sum of the fibrations corresponding to $W'$ and $W''$. 

Allowing \emph{negative} Dehn twists, which correspond to negative critical points, we can more generally work with \emph{factorizations} for achiral Lefschetz fibrations and pencils. All of the above extend to this general case.

\subsection{Symplectic $4$-manifolds and the Kodaira dimension}  \

It was shown by Donaldson that every symplectic $4$-manifold $(X, \omega)$ admits a \emph{symplectic} Lefschetz pencil whose fibers are symplectic with respect to $\omega$ \cite{Donaldson}. Conversely, generalizing a construction of Thurston, Gompf showed that the total space of a (nontrivial) Lefschetz fibration, and in particular blow-up of any pencil, always admits a symplectic form $\omega$ with respect to which all regular fibers and any preselected collection of disjoint sections are symplectic \cite{GompfStipsicz}. Whenever we take a symplectic form $\omega$ on a Lefschetz pencil or fibration $(X,f)$ we will assume that it is of Thurston-Gompf type, with respect to which any explicitly discussed sections of $f$ will always be assumed to be symplectic as well.

The Kodaira dimension for projective surfaces can be extended to symplectic $4$-manifolds as follows: Let $K_{X_{\text{min}}}$ be the canonical class of  a minimal model $(X_{\text{min}}, \omega_{\text{min}})$ of $(X, \omega)$. The \textit{symplectic Kodaira dimension} of $(X, \omega)$, denoted by $\kappa=\kappa(X,\omega)$ is defined as 
\[
\kappa(X,\omega)=\left\{\begin{array}{rl}-\infty& \mbox{\ \ if
}K_{X_{\text{min}}}\cdot[\omega_{\text{min}}]<0 \mbox{ or } K_{X_{\text{min}}}^{2}<0 \\
0 & \mbox{\ \ if } K_{X_{\text{min}}}\cdot[\omega_{\text{min}}]= K_{X_{\text{min}}}^{2}=0\\ 1 &
\mbox{\ \ if } K_{X_{\text{min}}}\cdot[\omega_{\text{min}}]>0\mbox{ and
} K_{X_{\text{min}}}^{2}=0\\2& \mbox{\ \ if } K_{X_{\text{min}}}\cdot[\omega_{\text{min}}]>0\mbox{
and } K_{X_{\text{min}}}^{2}>0\end{array}\right .
\]
Remarkably, $\kappa$ is not only independent of the minimal model $(X_{\text{min}}, \omega_{\text{min}})$ but also of the chosen symplectic form $\omega$ on $X$: it is a smooth invariant of the $4$-manifold $X$ \cite{LiSCY}. Symplectic $4$-manifolds with $\kappa = -\infty$ are classified up to symplectomorphisms, which are precisely the rational and ruled surfaces \cite{LiKodairaSurvey}. Symplectic $4$-manifolds with $\kappa=0$ are the analogues of the Calabi-Yau surfaces that have torsion canonical class \cite{LiSCY}. It has been shown by Tian-Jun Li, and independently by Stefan Bauer \cite{LiSCY, Bauer}, that the rational homology type of any \textit{minimal} symplectic Calabi-Yau surface is that of either a torus bundle over a torus, the $\K$ surface, or the Enriques surface.

\vspace{0.1in}
\section{Exotic symplectic rational surfaces via genus-$3$ pencils}

Here we construct explicit positive factorizations for symplectic genus-$3$ Lefschetz pencils, whose total spaces are homeomorphic but not diffeomorphic to the complex rational surfaces.

\subsection{Breeding genus-$3$ Lefschetz pencils from genus-$2$ pencils} \

In \cite{BaykurKorkmaz}, we have obtained the following relation in $\Gamma_2^1$
\begin{eqnarray*}
t_ e     t_{x_1}  t_{x_2} t_{x_3}   t_d  t_C  t_{ x_4 }  &=&  t_\delta \, ,
\end{eqnarray*}
which can be rewritten as
\begin{eqnarray*}
t_ e     t_{x_1}  t_{x_2} t_{x_3}   t_d t_{B_2}   t_C  t_\delta^{-1}  &=&   1 \, ,
\end{eqnarray*}
where $B_2=  t_c(x_4)$. Embed this relation into $\Gamma_3^1$ so that $\delta=\partial \Gamma_2^1$ is mapped to $c'$ and the remaining curves are as in Figure~\ref{Exoticcurves}, denoted by the same letters. (So, along with the embedding of $x_4$, the original curves on $\Gamma_2^1$ can be all seen in the obvious subsurface of $\Gamma_3^1$ in Figure~\ref{Exoticcurves}.) We thus get the following relation in $\Gamma_3^1$:
\begin{eqnarray} \label{embed1}
t_ e     t_{x_1}  t_{x_2} t_{x_3}   t_d t_{B_2}   t_C  t_{C'}^{-1} &=&   1 \, ,
\end{eqnarray}
or $P_1 \,  t_C  t_{C'}^{-1} =   1$, where we set $\underline{P_1= t_ e     t_{x_1}  t_{x_2} t_{x_3}   t_d t_{B_2}}$. Note that $P_1, t_C$ and $t_{C'}$ all commute with each other.

\begin{figure}[p!]
 \centering
     \includegraphics[width=12cm]{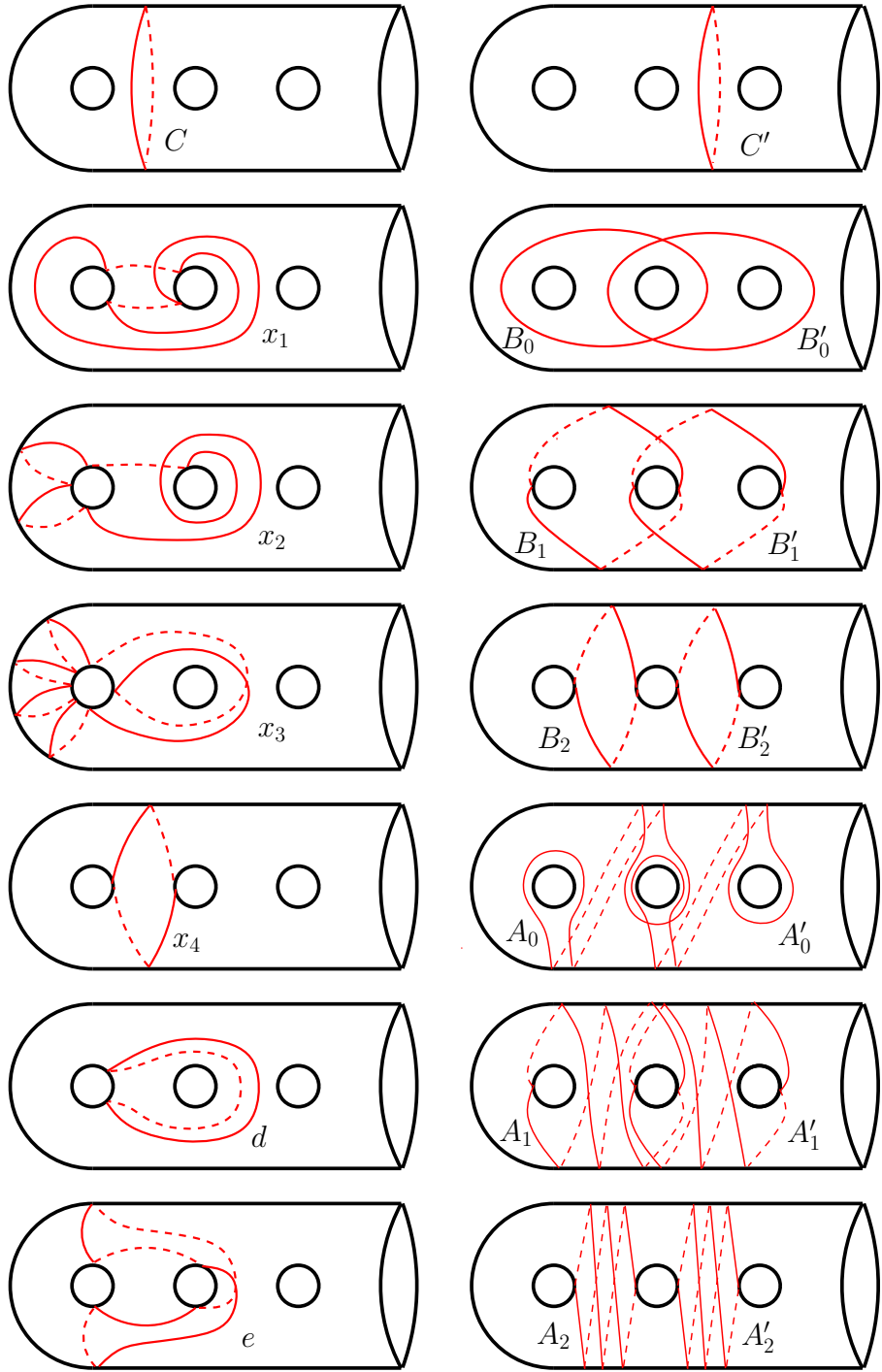}
\vspace{0.1in}
     \caption{Curves $C, x_1, x_2, x_3, x_4, d, e$ of the first embedding are given on the left. Curves $B_0, B_1, B_2, A_0, A_1, A_2$ of the second embedding (along with the same $C$) and $C', B'_0, B'_1, B'_2, A'_0, A'_1, A'_2$ of the third embedding are on the right. }
     \label{Exoticcurves}
\end{figure}

Our second building block will be a positive factorization of the boundary multi-twist in $\Gamma_2^2$, which is a \emph{lift} of the monodromy factorization of Matsumoto's well-known genus-$2$ Lefschetz fibration \cite{Matsumoto}. Such a lift to $\Gamma_2^1$ was first found by Ozbagci and Stipsicz in \cite{OzbagciStipsicz2} and a further lift to $\Gamma_2^2$ by Korkmaz in \cite{Korkmaz2}. We will however work with another lift discovered by Noriyuki Hamada, which yields the following relation in $\Gamma_2^2$ \cite{Hamada}:
\begin{eqnarray} \label{firstlift}
(t_{B_0} t_{B_1} t_{B_2} t_{C} )^2 &=& t_{\delta_1} t_{\delta_2}  \, ,
\end{eqnarray}
where $\delta_i$ are the boundary parallel curves, and the curves $B_i$ and $C$ are as shown on the left-hand side of Figure~\ref{Matsumoto}. We can rewrite this relation in $\Gamma_2^2$ as
\begin{eqnarray*}
t_{B_0} t_{B_1} t_{B_2} t_{A_0} t_{A_1} t_{A_2} t_{C}^2 t_{\delta_1}^{-1} t_{\delta_2}^{-1} &=&  1  \, ,
\end{eqnarray*}
where each $A_j= t_C(B_j)$ for $j=0,1,2$ (Figure~\ref{Matsumoto}). 

Let us first view this relation in $\Gamma_2^1$ by capping off one of the boundary components, say $\delta_1$. Embed the resulting relation into $\Gamma_3^1$, as induced by the same embedding of $\Gamma_2^1$ into $\Gamma_3^1$ we used above so that the boundary is mapped to $c'$, and all the other curves are as shown in Figure~\ref{Exoticcurves}, once again denoted by the same letters for simplicity. So the following holds in $\Gamma_3^1$:
\begin{eqnarray} \label{embed2}
t_{B_0} t_{B_1} t_{B_2} t_{A_0} t_{A_1} t_{A_2} t_{C}^2 t_{C'}^{-1} &=&  1  \, ,
\end{eqnarray}
or $P_2 \, t_{C}^2 t_{C'}^{-1}=   1$, where $\underline{P_2= t_{B_0} t_{B_1} t_{B_2} t_{A_0} t_{A_1} t_{A_2}} $. Here $P_2, t_C$ and $t_{C'}$ all commute with each other.

Finally, we will embed the lat relation we had above in $\Gamma_2^2$ into $\Gamma_3^1$ using an embedding of $\Sigma_2^2$ into $\Sigma_3^1$ such that $\delta_1$ is mapped to $c$, $\delta_2$ is mapped to $\delta=\partial \Sigma_3^1$, and the remaining curves are as shown in Figure~\ref{Exoticcurves}, this time their labels are decorated by prime. This gives  a third relation in $\Gamma_3^1$:
\begin{eqnarray} \label{embed3} 
t_{B'_0} t_{B'_1} t_{B'_2} t_{A'_0} t_{A'_1} t_{A'_2} t_{C'}^2 t_{C}^{-1} &=&  t_{\delta}  \, ,
\end{eqnarray} 
or $P'_2 \, t_{C'}^2 t_{C}^{-1}=  t_{\delta} $, where $\underline{P'_2=t_{B'_0} t_{B'_1} t_{B'_2} t_{A'_0} t_{A'_1} t_{A'_2}} $. Similar to the above, $P'_2, t_C$ and $t_{C'}$ all commute with each other.

\smallskip
Now for $\phi= t_{b_1}^{-1} t_{a_2}$ (where $b_1, a_2$ are as in Figure~\ref{pi1generators}), and in fact for any mapping class in $\Gamma_3^1$ that keeps $C$ and $C'$ fixed, we have
\begin{equation*}
(P_1)^{\phi} P_1 P'_2 \, t_C  = (P_1)^{\phi} t_C  t_{C'}^{-1} \, P_1 t_C  t_{C'}^{-1}  \,  P'_2 \, t_{C'}^2 t_{C}^{-1}  = (P_1 t_C  t_{C'}^{-1})^{\phi} \, t_{\delta}  =t_\delta \, ,
\end{equation*}
where the first equality follows from the commutativity relations noted above, and the second equality follows from the relations~$(\ref{embed1})$--$(\ref{embed3})$. So $\underline{W_1=(P_1)^{\phi} P_1 P'_2  t_C}$ is a positive factorization of $t_{\delta}$ in $\Gamma_3^1$. Similarly, we get two more positive factorizations $\underline{W_2=(P_1)^{\phi} P_2 P'_2 \, t^2_C}$ and $\underline{W_3=(P_2)^{\phi} P_2 P'_2 \, t^3_C}$ of $t_{\delta}$ in $\Gamma_3^1$. 

Each $W_i$ prescribes a symplectic genus-$3$ Lefschetz fibration  $(\widetilde{X_i}, \widetilde{f_i})$ with a \linebreak $(-1)$-section, which can be blown-down to arrive at a symplectic genus-$3$ Lefschetz pencil $(X_i, f_i)$ with one base point. They are all obtained by \emph{breeding} the genus-$2$ Lefschetz pencils whose monodromies we have lifted to $\Gamma_3^1$.

\smallskip
\vspace{0.1in}
\subsection{Homeomorphism type of the pencils $(X_i, f_i)$} \ \label{homeo}

We will prove the following:

\begin{lemma} \label{lemhomeo}
$X_i$ is homeomorphic to $\CP \# \, (6+i) \, \CPb$, for $i=1,2,3$. 
\end{lemma}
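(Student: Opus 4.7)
The approach is to invoke Freedman's classification theorem for closed simply connected topological $4$-manifolds with odd intersection form: the homeomorphism type is then determined by $\chi$ and $\sigma$. So it suffices to show that each $X_i$ is simply connected and non-spin, and that $(\chi(X_i), \sigma(X_i)) = (9+i, -(5+i))$.

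The Euler characteristic follows by counting Dehn twist factors in $W_i$. The words $W_1, W_2, W_3$ contain $n_i = 19, 20, 21$ positive Dehn twists respectively (six from each of $(P_j)^\phi, P_j, P'_2$, plus $i$ copies of $t_C$), so the blown-up Lefschetz fibration $\widetilde{X_i}$ has $\chi(\widetilde{X_i}) = 4 - 4\cdot 3 + n_i = n_i - 8$, and blowing down the $(-1)$-section to pass to the pencil gives $\chi(X_i) = n_i - 9 \in \{10, 11, 12\}$, matching $9+i$. The signature $\sigma(X_i) = -(5+i)$ can be computed directly from $W_i$ by summing local signature contributions via Endo's signature cocycle on $\Gamma_3$; alternatively it follows from $c_1^2 = 2\chi + 3\sigma$ once $c_1^2(X_i) = 3-i$ is known (cf.\ Theorem~\ref{main2}). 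Finally, the pencil has exactly one base point, so a regular fiber $F$ satisfies $F \cdot F = 1$; by Wu's formula $w_2(X_i) \cdot [F] \equiv 1 \pmod 2$, and hence the intersection form is odd.

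The central step is to verify $\pi_1(X_i) = 1$. Using the standard presentation
\[
\pi_1(X_i) \;=\; \pi_1(\Sigma_3) \big/ \bigl\langle\!\bigl\langle c : c \text{ is a vanishing cycle of } W_i \bigr\rangle\!\bigr\rangle,
\]
I would fix standard generators $a_j, b_j$ ($j=1,2,3$) of $\pi_1(\Sigma_3)$ as in Figure~\ref{pi1generators} and then express each vanishing cycle of $W_i$, together with its partial conjugate under $\phi = t_{b_1}^{-1} t_{a_2}$, as a word in those generators. Three structural observations drive the reduction. First, Hamada's lift~(\ref{firstlift}) implies that $P_2$ and $P'_2$ each kill $\pi_1$ of their supporting genus-$2$ subsurface modulo the boundary-parallel class $C$ (respectively $C, C'$). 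Second, the explicit $t_C$ factors in $W_i$ together with the $t_{C'}$-twists implicit in relations $(\ref{embed1})$--$(\ref{embed3})$ trivialize both separating classes. Third, $P_1$ contributes the curves $e, d, x_1, x_2, x_3, B_2$ that couple the two genus-$2$ halves of $\Sigma_3$, while the partial conjugation by $\phi$ is what forces the ``shifted'' generators $\phi(a_j), \phi(b_j)$ to be trivialized as well, leaving no generator of $\pi_1(\Sigma_3)$ undetected. I would then run through the six standard generators and verify that each becomes trivial, using the commutations $[P_k, t_C] = [P_k, t_{C'}] = 1$ to re-bracket factors as needed.

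With $\pi_1(X_i) = 1$ established, $\chi(X_i)$ and $\sigma(X_i)$ as computed, and the intersection form shown to be odd, Freedman's theorem gives $X_i \cong \CP \# (6+i)\CPb$. The main obstacle is clearly the $\pi_1$ computation: the invariant count and parity are essentially formal, but trivializing the fundamental group requires careful tracking of roughly two dozen conjugated vanishing cycles and relies on the specific geometric configuration pictured in Figure~\ref{Exoticcurves}.
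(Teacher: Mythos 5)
Your overall frame (Freedman plus $\chi$, $\sigma$, odd type, and $\pi_1=1$) matches the paper's, the twist count giving $\eu(X_i)=9+i$ is right, and your oddness argument via $[F]^2=1$ for the one--base-point pencil is fine (indeed simpler than the paper's appeal to a reducible fiber). But the central step, $\pi_1(X_i)=1$, is not actually carried out, and the ``structural observations'' you propose to drive it are partly false, so following them would not produce a proof. First, it is not true that $P_2$ and $P'_2$ kill the $\pi_1$ of their supporting genus-$2$ subsurfaces modulo the boundary-parallel class: their vanishing cycles are (up to the $t_C$-conjugation relating $A_j$ to $B_j$) exactly those of Matsumoto's fibration, whose total space has $\pi_1\cong\Z^2$. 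This is precisely why, in the actual computation, the relators coming from $P'_2\,t_C$ together with $P_1$ (or $P_2$) only cut $\pi_1(\widetilde{X_i})$ down to a quotient of $\Z^2$ generated by $a_2,b_2$; nothing at that stage makes the group trivial. Second, $C'$ is \emph{not} a vanishing cycle of $W_i$ (the $t_{C'}^{\pm1}$ twists are exactly what gets cancelled in forming $W_i$), so the ``$t_{C'}$-twists implicit in (\ref{embed1})--(\ref{embed3})'' contribute no relator and cannot be used to trivialize the separating classes a priori. Third, the role of the conjugation is mischaracterized: what kills the residual $\Z^2$ is not that ``shifted generators $\phi(a_j),\phi(b_j)$'' get trivialized, but that the conjugated vanishing cycles $\phi(c)$ have \emph{different homology classes} (computed via Picard--Lefschetz for the specific $\phi=t_{b_1}^{-1}t_{a_2}$), yielding the extra abelianized relations (e.g. $b_1+a_2+b_2=0$) which, combined with the unconjugated ones, force $a_2=b_2=0$. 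Without this homological bookkeeping for the conjugated factor, the proof of simple connectivity --- the heart of the lemma --- is missing.

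Two smaller points. Your signature computation is only asserted: ``Endo's cocycle'' in the guise of a closed formula applies to hyperelliptic fibrations, and these genus-$3$ factorizations need not be hyperelliptic; what works is Meyer's cocycle/Ozbagci's algorithm (which the paper cites, and then shortcuts by viewing $W_i$ as obtained from a fiber sum of achiral fibrations, applying Novikov additivity and the genus-$2$ hyperelliptic signature formula to the summands, and noting that cancelling $t_{C}^{\pm1}$, $t_{C'}^{\pm1}$ pairs does not change $\sigma$). Your proposed alternative --- deducing $\sigma$ from $c_1^2(X_i)=3-i$ ``cf.\ Theorem~\ref{main2}'' --- is circular, since that value of $c_1^2$ is itself established from the Euler characteristic and signature computations for these very pencils.
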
 

\begin{proof}
The proof will follow from Freedman's celebrated work \cite{Freedman}, for which we will determine the Euler characteristic, the signature, and the spin type of $X_i$, while showing that $X_i$ is simply-connected for all $i=1,2,3$.

The Euler characteristic of $X_i$ is given by 
\[ \eu(X_i)= \eu(\widetilde{X_i})-1= (4-4\cdot 3+\ell(W_i))-1=\ell(W_i)-9 \, , \]
 where $\ell(W_i)$ is the number of Dehn twists in the positive factorization $W_i$. Since $\ell(P_1)=\ell(P_2)=\ell(P'_1)=6$,  we have $\ell(W_i)=18+i$, and in turn, $\eu(X_i)= 9+i$.

The signature of $X_i$ can be calculated as $\sigma(X_i)=-5-i$  using  Ozbagci's algorithm \cite{Ozbagci}. Here is a quick argument for this calculation: we have obtained the monodromy factorization of $(\widetilde{X_1}, \widetilde{f_1})$ after canceling a pair of $t_C$ and ${t_C}^{-1}$, and two pairs of $t_{C'}$ and ${t^{-1}_{C'}}$ in the factorization 
\[ (P_1 t_C  t_{C'}^{-1})^{\phi} \, P_1 t_C  t_{C'}^{-1}  \,  P'_2 \, t_{C'}^2 t_{C}^{-1} = 1 \,  \]
in $\Gamma_3$ (where we capped off the boundary). The latter is a factorization of a genus-$3$ \emph{achiral} Lefschetz fibration, which is a fiber sum of three achiral Lefschetz fibrations prescribed by $P_1 t_C  t_{C'}^{-1}$.  $P_1 t_C  t_{C'}^{-1}$, and $P'_2 \, t_{C'}^2 t_{C}^{-1}$.  Each one of these three fibrations is obtained by a \emph{section sum} of an achiral genus-$2$ Lefschetz fibration and a trivial torus fibration along a section of self-intersection zero, so by Novikov additivity, its signature is equal to that of the achiral genus-$2$ fibration. The hyperelliptic signature formula for genus-$2$ fibrations \cite{Matsumoto, Endo} allows us to easily calculate these as $-2$, $-2$, and $-3$, which add up to give signature $-7$ for the original genus-$3$ achiral Lefschetz fibration. The signature of $(\widetilde{X_1}, \widetilde{f_1})$ is the same as the signature of this achiral fibration, since canceling a pair of separating Dehn twists with opposite signs amounts to taking out a pair of $\pm 1$ contribution to the signature. Hence, we get $\sigma(X_1) = \widetilde{X_1}+1=-6$. By identical arguments, we get $\sigma(X_2)=-7$ and  $\sigma(X_3)=-8$. 

Clearly each $X_i$ has an odd intersection form: this can be easily seen from the presence of  the reducible fiber component split off by the separating vanishing cycle $C$ and not hit by the $(-1)$-sphere. (Alternatively, we can use Rokhlin's theorem after showing that $\pi_1(X_i)=1$.)

Lastly, we will show that $\pi_1(X_i)=\pi_1(\widetilde{X_i})=1$, by calculating the latter as the quotient of $\pi_1(\Sigma_3)$ by the subgroup normally generated by the vanishing cycles of $(\widetilde{X_i}, \widetilde{f_i})$, for each $i=1,2,3$. It suffices to find enough relations that kill the generators $a_1, b_1, a_2, b_2, a_3, b_3$ of $\pi_1(\Sigma_3)$ (which are as shown in Figure~\ref{pi1generators}), so we do not need to work with the full presentation.
  
Each positive factorization $W_i$ contains the factor $P'_2 \, t_{C}$. So the following relations  hold for the finite presentations we have for each $\pi_1(\widetilde{X_i})$:
\begin{eqnarray}
& \phantom{o} &   [a_1,b_1][a_2,b_2][a_3,b_3]=1,\label{eqn:0} \\
& \phantom{o} &  [a_1,b_1]=1,  \label{eqn:C}  \\
& \phantom{o} &   a_2a_3=1, \label{eqn:B'0} \\
& \phantom{o} &   a_2 \bar{b}_2 a_3 \bar{b}_3  =1, \label{eqn:B'1}  \\
& \phantom{o} &  b_3 b_2  =1, \label{eqn:B'2}	
\end{eqnarray}
where the relators ~$(\ref{eqn:C})$--$(\ref{eqn:B'2})$ come from the 
vanishing cycles $C, B'_0, B'_1, B'_2$, respectively. We have $a_2= \bar{a}_3$ from $(\ref{eqn:B'0})$ and $b_2= \bar{b}_3$ from $(\ref{eqn:B'2})$. Together with $(\ref{eqn:B'1})$, these imply $[a_2, b_2]=[a_3,b_3]=1$. Together with  $(\ref{eqn:C})$, we conclude $[a_j, b_j]=1$ for every $j=1,2,3$ (and  $(\ref{eqn:0})$ becomes a trivial relation). 

From the factor $P_1$, we get the following relations (among many others)
\begin{eqnarray}
& \phantom{o} &  a_1 (\bar{b}_1a_2b_2)^2=1,      \label{eqn:x1} \\  
& \phantom{o} &  a_1  \bar{b}^3_1 a_2 b_2  a_2=1,      \label{eqn:x2} \\  
& \phantom{o} &  a_1  \bar{b}^5_1 a_2 [b_2,  a_2]  b_1 a_2 =1, \label{eqn:x3} \\  
& \phantom{o} &  b_2 b_1 [b_3, a_3]   =1,  \label{eqn:B2}
\end{eqnarray}
induced by  the vanishing cycles $x_1, x_2, x_3$ and $B_2$, respectively. Adding these to the previous relators from the factor $P'_2$, we immediately see that $[a_3,b_3]=1$ and $(\ref{eqn:B2})$ imply $b_1=\bar{b}_2$. So $(\ref{eqn:x1})$ implies that $a_1$ can be generated by $a_2$ and $b_2=\bar{b}_1$. 

From the factor $P_2$ we get the following relations (again, among many others)
\begin{eqnarray}
& \phantom{o} &   a_1 a_2=1, \label{eqn:B0} \\
& \phantom{o} &    b_2 \bar{a}_2 b_1 \bar{a}_1 [b_3, a_3]  =1, \label{eqn:B1}  \\
& \phantom{o} & b_2 b_1  [b_3, a_3]  =1,  \nonumber
\end{eqnarray}
induced by the vanishing cycles $B_0, B_1$ and $B_2$, respectively. We get $a_1=\bar{a}_2$, and together with the relators from $P'_2$ we once again get $b_1=\bar{b}_2$, since $[a_3, b_3]=1$. 

Since the positive factorization $W_1$ contains the factor $P_1 P'_2  \, t_{C}$ and $W_2, W_3$ both contain the factor $P_2 P'_2 \, t_{C}$, the above discussion shows that every $\pi_1(\widetilde{X_i})$ is a factor of the abelian group $\Z^2$ generated by $a_2$ and $b_2$. (In fact no additional relators come from the remaining twists in $P_1, P_2$ or $P'_2$.) For the remaining relators coming from the conjugated factors $P_1^{\phi}$ or $P_2^{\phi}$, it therefore suffices to look at the relators they induce in $H_1(\Sigma_3)$. Moreover, this allows us to simply look at the homology classes of the vanishing cycles in these conjugated factors. (We haven't bothered to give explicitly for this very reason.) 

Without the conjugated factor, in each case we have the abelianized relations
\begin{equation} \label{ab1}
a_3=-a_2 \, \text{ and }  \, b_3=-b_2=b_1, 
\end{equation}
and depending on whether $W_i$ contains the factor $P_1$ or $P_2$, either 
\begin{align} 
& \phantom{o} & a_1+ 2a_2 +4b_2 &= 0  \ \ \  \text{for $W_1$, or}   \label{ab2-1} \\
& \phantom{o} & a_1+a_2 &= 0 \ \ \ \text{for $W_2$ and $W_3$}, \label{ab2-2}
\end{align}
where we used $(\ref{ab1})$ to simplify.

For our conjugation $\phi= t_{b_1}^{-1} t_{a_2}$, we easily check using the Picard-Lefschetz formula that we get the additional relator
\begin{equation} \label{ab3}
 b_1 + a_2 +b_2 =0, 
\end{equation}
from both $P_1^{\phi}$ and $P_2^{\phi}$ whereas, depending on whether it is $P_1^{\phi}$or $P_2^{\phi}$ we get either
\begin{align} 
& \phantom{o} & a_1 + b_1 + 6 a_2 + 4 b_2 &=0 \ \ \  \text{for $W_1$ and $W_2$, or}   \label{ab4-1} \\
& \phantom{o} & a_1+ b_1+a_2 &= 0 \ \ \ \text{for $W_3$}. \label{ab4-2}
\end{align}
Therefore, $(\ref{ab1})$ and $(\ref{ab3})$, which hold for all $\pi_1(\widetilde{X_i})$, imply $a_2=0$. The relators in $(\ref{ab2-1})$, $(\ref{ab2-2})$ and $(\ref{ab4-1})$, $(\ref{ab4-2})$  involved in a given $W_i$ then easily give $b_2=-b_1=0$, for each $i=1,2,3$. Hence in all cases we have got $\pi_1(X_i)=\pi_1(\widetilde{X_i})=1$.

By Freedman, we have $X_i$ homeomorphic to $\CP \# \, (6+i) \, \CPb$, for $i=1,2,3$. 
\end{proof}

\smallskip
\vspace{0.1in}
\subsection{Diffeomorphism type of the pencils $(X_i, f_i)$} \

A quick amendment to the title: we will determine the ``non-diffeomorphism type'' of $(X_i, f_i)$, namely, we will prove that $X_i$ are not diffeomorphic to rational surfaces. This will follow from the next lemma.

\begin{lemma} \label{notrational}
The rational surface $\CP \# p \CPb$,  for $p \leq 9$, does not admit a genus $g \geq 2$ Lefschetz fibration or a genus-$g$ Lefschetz pencil with $m < 2g-2$ base points.
\end{lemma}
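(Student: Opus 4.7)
The plan is to derive a contradiction by comparing what adjunction forces on $K_X \cdot [F]$ for a regular fiber $F$ against the constraint that rationality of $X$ places on $K_X \cdot [\omega]$, via a Hodge-index argument in signature $(1,p)$.

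I would first set up the numerical data. Suppose such a genus-$g$ Lefschetz pencil $(X,f)$ with $g \geq 2$ and $m < 2g - 2$ base points exists on $X = \CP \# p \CPb$ for some $p \leq 9$ (taking $m = 0$ in the fibration case). A regular fiber $F \subset X$ is a smooth embedded symplectic surface of genus $g$ with respect to any Gompf-Thurston form $\omega$. Since two distinct fibers meet transversely exactly at the $m$ base points and nowhere else, $[F]^2 = m$. Adjunction then gives
\[ K_X \cdot [F] \,=\, 2g - 2 - m \,\geq\, 1. \]

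Next, I would bring in two standard facts. Because $F$ is $\omega$-symplectic, $[F] \cdot [\omega] > 0$. Because $X$ is rational, $\kappa(X) = -\infty$, and by Liu's theorem on the symplectic cone of a rational or ruled $4$-manifold, $K_X \cdot [\omega] < 0$ for every symplectic form on $X$; in particular, $(-K_X) \cdot [\omega] > 0$ for our chosen Gompf-Thurston form.

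I would then run a Hodge-index argument. The intersection form on $H^2(X;\R)$ has signature $(1,p)$, and for $p \leq 9$ we have $K_X^2 = 9 - p \geq 0$ as well as $[F]^2 = m \geq 0$. Hence both $-K_X$ and $[F]$ lie in the closure of the positive cone and both pair positively with $[\omega]$, placing them in the closure of the same component. The reverse Cauchy-Schwarz inequality in Lorentzian signature then yields $(-K_X) \cdot [F] \geq 0$, i.e.\ $K_X \cdot [F] \leq 0$, contradicting $K_X \cdot [F] \geq 1$.

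The main obstacle I anticipate is the borderline case $p = 9$, $m = 0$, where both $-K_X$ and $[F]$ lie on the light cone and Cauchy-Schwarz gives only weak inequality, with equality iff the two classes are proportional. Any such proportionality would force $K_X \cdot [F] = -\lambda \, [F]^2 = 0$, still contradicting $K_X \cdot [F] = 2g - 2 \geq 2$. A minor associated point is that Liu's theorem is often phrased for minimal $4$-manifolds; its extension to the non-minimal $\CP \# p \CPb$ with $p \leq 9$ follows from the explicit description of its symplectic cone (Li-Liu), which I would cite rather than re-derive.
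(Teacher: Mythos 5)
Your skeleton is correct, and it is in essence the same inequality as the paper's, reached by a more abstract route. The paper first reduces to $p=9$ by blowing up, writes $F=aH-\sum_{i=1}^9 c_iE_i$, uses Li--Liu to know the symplectic structure (hence the canonical class) is standard and that $H$ has a $J$-holomorphic representative for generic compatible $J$, so $a=F\cdot H\geq 0$, and then an explicit Cauchy--Schwarz estimate gives $3a\geq|\sum c_i|=2g-2-m+3a$, i.e.\ $m\geq 2g-2$. Unwinding this, the paper is proving exactly your conclusion $(-K_X)\cdot[F]\geq 0$ in coordinates, with the ``component-pinning'' done through $F\cdot H\geq 0$ rather than through $K_X\cdot[\omega]<0$; your version trades the coordinate computation for the Lorentzian reverse Cauchy--Schwarz inequality and avoids the reduction to $p=9$ by using the purely topological identity $K_X^2=2\eu(X)+3\sigma(X)=9-p\geq 0$. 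That is a genuine simplification of the bookkeeping, and your treatment of the borderline null case is fine (indeed unnecessary, since the weak inequality already contradicts $K_X\cdot[F]\geq 1$).

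The one step you should not leave as a black-box citation is ``$K_X\cdot[\omega]<0$ for every symplectic form on $X$ because $X$ is rational.'' As a blanket statement about rational \emph{or ruled} surfaces this is false: on $S^2\times\Sigma_h$ with $h\geq 2$ a product form with small base and large fiber area has $K\cdot\omega>0$; and for non-minimal $X$ the equality $\kappa=-\infty$ only constrains the minimal model, so this is not literally Liu's theorem. What you actually need is the special case $X=\CP\# p\,\CPb$, $p\leq 9$, and there it does hold, but the honest derivation is essentially the same Li--Liu input the paper uses: uniqueness of the symplectic canonical class up to diffeomorphism (so $K_\omega$ may be taken standard), plus the fact that $[\omega]\cdot H>0$ (e.g.\ from positivity of $[\omega]$ on the exceptional classes $E_i$ and $H-E_i-E_j$, or from a $J$-holomorphic representative of $H$ as in the paper), after which $-K=3H-\sum E_i$, having $(-K)^2=9-p\geq 0$ and $(-K)\cdot H=3>0$, lies in the closed forward cone and pairs positively with $[\omega]$ by the same light-cone argument you already invoke. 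With that step spelled out, your proof is complete and correct; as written, the justification of the sign of $K_X\cdot[\omega]$ is the only soft spot.
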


\begin{proof}
It suffices to show this for $X= \CP \# 9 \CPb$, since we can always blow-up on the fibers to produce a genus-$g$ Lefschetz fibration (resp. pencil) on $X$ from a given genus-$g$ Lefschetz fibration (resp. pencil) on $\CP \# p \CPb$, for any $p < 9$. Assume that $X$ is equipped with a  genus $g \geq 2$ Lefschetz fibration or a pencil with $m < 2g-2$ base points. For our arguments to follow, it will be convenient to allow $m$ to be non-negative so that $m=0$ marks the fibration case.

Let $F= a H - \sum_{i=1}^9 c_i E_i$ be the fiber class, where $H_2(X)$ is generated by $H$ and $E_1, \ldots, E_9$, with $H^2=1$, $E_i \cdot E_j=- \delta_{ij}$, and $H \cdot E_i=0$. Since $F^2=m$, we have 
\[ a^2= m+ \sum_{i=1}^9 c_i^2 \, . \]

We can equip $X$ with a Thurston-Gompf symplectic form $\omega$ which makes the fibers symplectic. Moreover, we can choose an $\omega$-compatible almost complex structure $J$, even a generic one in the sense of Taubes (see e.g \cite{UsherDS}), with respect to which $f$ is $J$-holomorphic (for a suitable choice of almost complex structure on the base $2$-sphere).
It was shown by Li and Liu \cite{LiLiu} that for a generic $\omega$-compatible $J$, the class $H$ in the rational surface $X$ has a $J$-holomorphic representative . Hence, $F$ and $H$ both have $J$-holomorphic representatives, which implies that $F \cdot H = a \geq 0$.

There is a unique symplectic structure on $X$ up to deformation and symplectomorphisms \cite{LiLiu}, so we can apply the adjunction formula to get
\[ 2g-2= F^2 + K \cdot F = m +(-3H + \sum_{i=1}^9 E_i) \cdot (aH - \sum_{i=1}^9 c_i E_i) = m -3a + \sum_{i=1}^9 c_i \, .\]

Since $a, m \geq 0$, and $g \geq 2$, from the above equalities we have 
\[ 3 a  = \sqrt{9 a^2}=\sqrt{9 (m+\sum_{i=1}^9 c_i^2) }  \geq \sqrt{9 (\sum_{i=1}^9 c_i^2) }  = \sqrt{(\sum_{i=1}^9 1) (\sum_{i=1}^9 c_i^2)}  \geq \sqrt{|\sum_{i=1}^9 c_i|^2} \, , \]
where the last inequality is by Cauchy-Schwartz, and in turn 
\[  3a \geq  \sqrt{|\sum_{i=1}^9 c_i|^2}  = |\sum_{i=1}^9 c_i| = | 2g-2-m +3a | = 2g-2 -m  +3a\, , \]
implying $m \geq 2g-2$. The contradiction shows that there exists no such symplectic surface $F$. In turn, there is no such fibration or pencil. 
\end{proof}

\smallskip
\noindent
In fact, if $X$ is a symplectic $4$-manifold in the homeomorphism class of a rational surface  $\CP \# p \CPb$ with $p \leq 9$, $X$ is exotic \emph{if and only if} it admits a genus $g \geq 2$ pencil with $m < 2g-2$ base points. This follows from Lemma~\ref{notrational} and Donaldson's result on the existence of Lefschetz pencils on symplectic $4$-manifolds (together with the fact there is a unique genus-$1$ pencil, which is on $CP$).


Combining the lemmas~\ref{lemhomeo} and~\ref{notrational} we conclude:

\begin{theorem} \label{exoticrational}
$(X_i,f_i)$ are symplectic genus-$3$ Lefschetz pencils whose total spaces are homeomorphic but not diffeomorphic to $\CP \# (6+i) \CPb$ for $i=1,2, 3$.
\end{theorem}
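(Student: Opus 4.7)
The plan is to deduce Theorem~\ref{exoticrational} by directly combining the two lemmas that have already been established, namely Lemmas~\ref{lemhomeo} and~\ref{notrational}. First I would record that $(X_i, f_i)$ is genuinely a symplectic genus-$3$ Lefschetz pencil with one base point: the positive factorization $W_i$ of $t_{\delta}$ in $\Gamma_3^1$ prescribes a genus-$3$ Lefschetz fibration $(\widetilde{X_i}, \widetilde{f_i})$ carrying a $(-1)$-section (dual to the single boundary $\delta$), and blowing this section down yields the pencil $(X_i, f_i)$ with exactly $m=1$ base point. The Donaldson--Gompf machinery recalled in Section~\ref{preliminaries} then endows $X_i$ with a Thurston--Gompf symplectic form making $f_i$ symplectic.

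For the homeomorphism assertion, I would simply invoke Lemma~\ref{lemhomeo}, which already identifies $X_i$ topologically with $\CP \# (6+i)\CPb$ for $i=1,2,3$ by running through the Euler characteristic, signature, spin type, and simple-connectivity check and then applying Freedman's classification. No additional argument is needed here.

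For the ``not diffeomorphic'' half, I would argue by contradiction. Suppose $X_i$ were diffeomorphic to $\CP \# (6+i)\CPb$. Since $6+i \leq 9$, Lemma~\ref{notrational} rules out any genus-$g \geq 2$ Lefschetz pencil on $\CP \# (6+i)\CPb$ with fewer than $2g-2$ base points. However $(X_i,f_i)$ is a genus-$3$ pencil with $m=1 < 4 = 2g-2$ base points, giving precisely such a forbidden pencil on the rational surface and hence a contradiction.

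The only points requiring care are parameter bookkeeping: that the pencil indeed has exactly one base point (coming from the lone boundary component of $\Sigma_3^1$), that $6+i \leq 9$ puts us inside the range covered by Lemma~\ref{notrational}, and that the inequality $1 < 2 \cdot 3 -2$ is strict. All genuine technical content — the Cauchy--Schwarz/adjunction obstruction built from the Li--Liu result for the rational surface, and the $\pi_1$-killing plus signature computation — is already packaged in the two preceding lemmas, so no new obstacle arises at this stage; the theorem is a clean assembly of the material established in Sections~3.1--3.3.
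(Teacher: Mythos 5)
Your proposal is correct and matches the paper's own argument: Theorem~\ref{exoticrational} is stated there precisely as the combination of Lemma~\ref{lemhomeo} (homeomorphism via Freedman) and Lemma~\ref{notrational} (no genus-$g\geq 2$ pencil with fewer than $2g-2$ base points on $\CP \# p\,\CPb$, $p\leq 9$), applied to the genus-$3$ pencils with one base point. Your bookkeeping of $m=1<4$ and $6+i\leq 9$ is exactly the check needed, so nothing further is required.
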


\vspace{0.1in}
\begin{remark} \label{smallest}
These are the smallest genera examples one can find. Any Lefschetz pencil of genus $0$ or $1$ has a total space diffeomorphic to (a blow-up of) $\CPo \x \CPo$, $\CP \# \CPb$ or $E(1)=\CP \# 9 \CPb$ \cite{Kas, Moishezon}. After blowing-up all the base points of a genus-$2$ Lefschetz pencil we arrive at a genus-$2$ Lefschetz fibration on a non-minimal symplectic $(\W{X},\W{f})$. Now if $\W{X}$ is an exotic rational surface, it cannot have the rational homology type of a (blow-up of a) symplectic Calabi-Yau surface either. Thus there could be at most one base point, and as observed by Sato, $(\W{X},\W{f})$ has at most one reducible fiber; see Theorem~5-5(iii) and~Theorem 5-12(iii) in \cite{SatoKodaira}. By Lemmas~4 and~5 in \cite{BaykurKorkmaz}, there is no genus-$2$ Lefschetz fibration $(\W{X},\W{f})$ with at most one reducible fiber and say $\eu(\W{X}) \leq 13$. 

In fact, in \cite{BaykurKorkmaz}, we have constructed \emph{minimal} genus-$2$ Lefschetz fibrations whose total spaces are homeomorphic to rational surfaces $\CP \# \, p \CPb$, for $p=7,8,9$ \cite{BaykurKorkmaz}. These fibrations decompose as fiber sums, which implies their minimality \cite{Usher, BaykurPAMS}, and in turn why they are not diffeomorphic to blown-up rational surfaces.

The above observation generalizes to any genus $g \geq 3$ Lefschetz pencil $(X,f)$ whose total space is an exotic $\CP \# p \CPb$, for $p  < 9$, allowing us to conclude that $f$ has at most $2g-4$ base points. It follows from the following more general observation:  A collection of exceptional classes in the corresponding Lefschetz fibration $(\W{X}, \W{f})$ can be represented by disjoint multisections $S_j$, so each one intersects the regular fiber $F$ positively at least once. However, $(\sum S_j) \cdot F \geq  2g-3$ \, implies that $\kappa(X)= \kappa(\W{X} \leq 1$ by the work of Sato \cite{SatoKodaira}. Since $X$  is not rational (nor can possibly be ruled), we already know that $\kappa(X) \neq -\infty$. Further, $\kappa(X)=0$ or $1$ implies that the minimal model of $X$ has $c_1^2=0$, which is impossible for any $X$, as $c_1^2(X)=c_1^2(\CP \# p\,  \CPb)= 9-p >0$ and blow-down increases $c_1^2$. Hence $\kappa(X) = 2$ and has at most  $2g-4$ disjoint exceptional spheres. 

In particular we see that each $X_i$ in Theorem~\ref{exoticrational} is either minimal or it is only once blown-up of a minimal symplectic $4$-manifold.
\end{remark}

\begin{remark}
Using a variation of our construction with $3$ different embeddings of $P_1$ into $\Gamma_3$, we can get a genus-$3$ Lefschetz \emph{fibration} $(\W{X}_0, \W{f}_0)$ where $\W{X}_0$ is an exotic symplectic $\CP \# 7 \CPb$. I expect this monodromy lifts to $\Gamma_3^1$, and gives another pencil $(X_0, f_0)$ whose total space is an exotic $\CP \# 6 \CPb$. 
\end{remark}

\begin{remark} \label{distinctsymp}
The existence of exotic symplectic $4$-manifolds in the homeomorphism classes in Theorem~\ref{exoticrational}  was already established. The first example of an exotic symplectic $\CP \#  \, p \, \CPb$, for $p=9$, was the complex Dolgachev surface $E(1)_{2,3}$, as shown by Donaldson \cite{Donaldson87}. For $p=8$, the first example was Barlow's complex surface, as proved by Kotschick \cite{Kotschick}. For $p=7$, the first examples were constructed via generalized rational blowdowns by Jongil Park \cite{ParkJ}, which are symplectic by the work of Symington \cite{Symington}. Infinitely many distinct smooth structures in these homeomorphism classes were constructed using logarithmic transforms, knot surgeries and Luttinger surgeries \cite{FSKnotsurgery, Friedman, Szabo, FSDoublenode, ABP} (all of which are indeed instances of surgeries along tori \cite{BaykurSunukjian}.) 

Notably, only for $p=9$ it is known that there is an  \textit{infinite} family of pairwise non-diffeomorphic \textit{symplectic} $4$-manifolds in the homeomorphism class of $\CP \#  \, p \, \CPb$ (e.g. \cite{FSKnotsurgery}). Moreover, it was observed by Stipsicz and Szabo that Seiberg-Witten invariants cannot distinguish any two \textit{minimal} symplectic $4$-manifolds homeomorphic to $\CP \#  \, p \CPb$ for $p <9$ \cite{StipsiczSzabo}[Corollary~4.4]. It remains an open question whether there are two distinct symplectic $4$-manifolds in these homeomorphism classes \cite{Stern}[Problem~11]. It is thus desirable to have examples with more structure in order to address this intriguing question. For instance, one can use other $\phi$ to obtain simply-connected symplectic genus-$3$ pencils in the same homeomorphism classes, possibly not isomorphic to the ones in Theorem~\ref{exoticrational}. 
\end{remark}

\smallskip
\subsection{An infinite family of genus-$3$ pencils with $c_1^2=0,1,2$ and $\chi_h=1$} \

By varying our construction of the positive factorizations $W_i$, we can also produce symplectic genus-$3$ Lefschetz pencils with any prescribed abelian group of rank at most $2$. Let $M=(m_1,m_2)$, where $m_1$ and $m_2$ are nonnegative integers. For $i=1,2,3,$ let $(\widetilde{X}_{i,M},\widetilde{f}_{i,M})$ be the Lefschetz fibration prescribed by the positive factorization $W_i$ with $\phi_i= t^{-m_1}_{b_1} t_{a_2}^{m_2} $, and $({X}_{i,M},{f}_{i,M})$ be the corresponding pencil. Note that we still get a positive factorization $W_i$ since $\phi$ keeps $C$ and $C'$ fixed. 

We claim that $\pi_1(\widetilde{X}_{i,M}) \cong (\Z \, / {m_1 \, \Z}) \oplus (\Z \, / {m_2 \, \Z})$. To verify it, let's go back to our fundamental group calculation via the finite presentation which has the generators $a_j, b_j$ of $\pi_1(\Sigma_3)$, and relators induced by the Dehn twist curves in $W_i$.  The arguments we gave in Section~\ref{homeo} for the relators coming from the factors $P_1, P_2$ and $P'_2$ appearing in $W_i$ apply here as well. In particular,  $\pi_1(\widetilde{X}_{i,M}))$ is also a quotient of the abelian group $\Z^2$ generated by $a_2$ and $b_2$, and the following abelianized relations hold: 
\begin{align*} 
& \phantom{o} & a_3=-a_2 \, \text{ and }  \, b_3 =-b_2 &=b_1 \ &\text{for all $W_i$}, \\
& \phantom{o} & a_1+ 2a_2 +4 b_2 &= 0  \ \ \  &\text{for $W_1$,}  \\
& \phantom{o} & a_1+a_2 &= 0 \ \ \ &\text{for $W_2$ and $W_3$}. 
\end{align*}

Importantly, there are no other relations coming from the factors $P_1, P_2$ and $P'_2$: This is easy to see by abelianizing the relators $(~\ref{eqn:0})$--$(~\ref{eqn:B1})$, which include all the relators induced by the curves $x_1, x_2, x_3, B_0, B_1, B_2, B'_0, B'_1, B'_2$. Missing are the relators induced by the separating curves $d, e$ from $P_1$, the curves $A_0, A_1, A_2$ from $P_1$, and the curves  $A'_0, A'_1, A'_2$ from $P'_2$.  The first two are trivial in homology, so they have no contribution to the list of relators we already have. On the other hand, for each $j=0,1,2$,  $A_j$ is homologous to $B_j$, because  $[A_j]= [t_C(B_j)]=[B_j] + (C \cdot B_j) [C]$ by the Picard-Lefschetz formula, where $C$ is a separating cycle. Similarly each $A'_j$ is homologous to $B'_j$. Therefore the abelianized relations they induce are identical to those we already had from $B_j, B'_j$. 

For our conjugation $\phi= t_{b_1}^{-m_1} t_{a_2}^{m_2}$, we get the additional relators
\begin{align} 
& \phantom{o} & b_1 + m_2 a_2 +b_2 =0  &=0 \ \ \  \text{for all $W_i$,}  \label{abconj1} \\
& \phantom{o} & a_1 + m_1 b_1 + (2+4m_2) a_2 + 4 b_2 &=0 \ \ \  \text{for $W_1$ and $W_2$,}   \label{abconj2} \\
& \phantom{o} & a_1+ m_1 b_1+a_2 &= 0 \ \ \ \text{for $W_3$}. \label{abconj3}
\end{align}
Similar to before, $(\ref{abconj1})$ and $(\ref{abconj3})$, which hold for all $\pi_1(\widetilde{X}_{i,M})$, imply $m_2 a_2=0$. The remaining relators involved in a given $W_i$ then easily give $m_1 b_2=-m_1 b_1=0$, for each $i=1,2,3$. Hence, $\pi_1(X_{i,M})= \pi_1(\widetilde{X}_{i,M})= (\Z \, / {m_1 \, \Z}) \oplus (\Z \, / {m_2 \, \Z})$, as claimed.

Since the monodromy of each $(\widetilde{X}_{i,M}, \widetilde{f_{i,M}})$ is equivalent to that of $(\widetilde{X_i}, \widetilde{f_i})$ by a partial conjugation with $t_{b_1}^{-m_1+1} t_{a_2}^{m_2-1}$, they have the same Euler characteristic and signature. So $c_1^2(X_{i,M})=3-i$ and $\chi(X_{i,M})=1$, for $i=1,2,3$. For each $i$, we clearly have an infinite family of symplectic genus-$3$ pencils  among 
\[ \{ ({X}_{i,M},{f}_{i,M}) \ | \ M = (m_1, m_2) \in \N \times \N \ \} , \]
with pairwise homotopy inequivalent total spaces easily distinguished by $\pi_1(X_{i,M})$. By Parshin and Arakelov’s proofs of the Geometric Shafarevich Conjecture, there are finitely many holomorphic fibrations with fixed fiber genus $g\geq 2$ and degeneracy \cite{Parshin, Arakelov}. This implies that all but finitely many of these symplectic pencils are non-holomorphic. Furthermore, for $M=(m_1, 0)$ with $m_1 \geq 2$, we get an infinite subfamily of pencils, whose total spaces cannot be complex surfaces, since there is no complex surface $Z$ with $b_1(Z)=1$ and $b^+(Z)>0$ (e.g. cf. \cite{BaykurHolomorphic}[Lemma~2]). (Note that the same argument applies to the infinite family of symplectic Calabi-Yau surfaces we will construct in the next section.)

We have proved:

\begin{theorem} \label{nonholom}
There exists a symplectic genus-$3$ Lefschetz pencil $(X_{i,M}, f_{i,M})$ with \linebreak $c_1^2(X_{i,M})=3-i$, $\chi(X_{i,M})=1$, and $\pi_1(X_{i,M})=  (\Z \, / {m_1 \, \Z}) \oplus (\Z \, / {m_2 \, \Z})$, for any pair of non-negative integers $M=(m_1, m_2)$, and for each $i=1,2,3$. Infinitely many of these pencils have total spaces homotopy inequivalent to a complex surface.
\end{theorem}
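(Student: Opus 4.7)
The plan is to rerun the construction of Theorem~\ref{exoticrational} with a parametrized partial conjugation: for each $M=(m_1, m_2)$, replace $\phi = t_{b_1}^{-1} t_{a_2}$ by $\phi_i = t_{b_1}^{-m_1} t_{a_2}^{m_2}$ in the word $W_i$ to form $W_{i,M}$. The key observation is that $\phi_i$ still fixes both $C$ and $C'$, so the commutativity relations $[P_j, t_C] = [P_j, t_{C'}] = 1$ for $P_j \in \{P_1, P_2, P'_2\}$ carry over verbatim, and the telescoping derivation of $W_i = t_\delta$ in $\Gamma_3^1$ goes through unchanged. This yields a symplectic genus-$3$ Lefschetz fibration $(\widetilde X_{i,M}, \widetilde f_{i,M})$ with a $(-1)$-section; blowing the section down produces the desired pencil $(X_{i,M}, f_{i,M})$. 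Because $W_{i,M}$ has the same Dehn twist count as $W_i$ and differs from it only by a further partial conjugation along a $P_1$ or $P_2$ factor, the Euler characteristic and signature calculations from Section~\ref{homeo} apply verbatim to give $c_1^2(X_{i,M}) = 3-i$ and $\chi(X_{i,M}) = 1$.

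The main technical step is the fundamental group computation, which I would mirror on the argument in Lemma~\ref{lemhomeo}. The relators from the unconjugated factors $P_1, P_2, P'_2$ produce the same abelianized relations as before: $\pi_1(\widetilde X_{i,M})$ is generated by $a_2$ and $b_2$, with $a_3 = -a_2$ and $b_3 = -b_2 = b_1$. I would verify that no additional relations arise from the other curves in these factors: the separating curves $d, e$ appearing in $P_1$ are null-homologous, and each $A_j$ (respectively $A'_j$) is homologous to $B_j$ (respectively $B'_j$) by the Picard--Lefschetz formula applied across the separating cycle $C$ (respectively $C'$). Applying the Picard--Lefschetz formula to the conjugation $\phi_i$ then produces $b_1 + m_2 a_2 + b_2 = 0$, together with either $a_1 + m_1 b_1 + (2+4m_2) a_2 + 4 b_2 = 0$ (for $W_1, W_2$) or $a_1 + m_1 b_1 + a_2 = 0$ (for $W_3$). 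Substituting the earlier relations collapses the abelianization to $(\Z/m_1\Z) \oplus (\Z/m_2\Z)$, whence $\pi_1(X_{i,M}) \cong (\Z/m_1\Z) \oplus (\Z/m_2\Z)$. I expect the main delicate point of the whole argument to be precisely this Picard--Lefschetz bookkeeping to confirm the torsion orders are exactly $m_1$ and $m_2$ and not some larger divisors.

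For the non-holomorphicity claim, I would first invoke the Parshin--Arakelov solution of the Geometric Shafarevich Conjecture: only finitely many isomorphism classes of holomorphic genus-$g \geq 2$ Lefschetz fibrations share a fixed singular locus. Since the computed $\pi_1$ distinguishes infinitely many pencils in each family up to homotopy, infinitely many must be non-holomorphic. To strengthen this to homotopy-inequivalence with any complex surface, I would specialize to the subfamily $M = (m_1, 0)$ with $m_1 \geq 2$, for which $\Z/0\Z = \Z$ gives a $\Z$-summand in $\pi_1$ and hence $b_1(X_{i,M}) = 1$. A Noether-type calculation from $\chi_h = 1$ and $b_1 = 1$ then forces $b^+(X_{i,M}) = (\eu + \sigma)/2 = 2 > 0$, and the classical obstruction recalled in \cite{BaykurHolomorphic}, stating that no complex surface simultaneously has $b_1 = 1$ and $b^+ > 0$, completes the proof.
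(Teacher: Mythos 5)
Your proposal matches the paper's own proof in essentially every step: the parametrized conjugation $\phi=t_{b_1}^{-m_1}t_{a_2}^{m_2}$ fixing $C, C'$, the invariance of $\eu$ and $\sigma$ under partial conjugation, the same abelianized relator bookkeeping (including the observations on $d,e$ and on $A_j$ being homologous to $B_j$) leading to $\pi_1\cong(\Z/m_1\Z)\oplus(\Z/m_2\Z)$, and the Parshin--Arakelov finiteness plus the $b_1=1$, $b^+>0$ obstruction for the subfamily $M=(m_1,0)$, $m_1\geq 2$. The argument is correct as proposed.
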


\smallskip
\begin{remark}\label{nonholompencil}
There is only one example of a non-holomorphic genus-$3$ Lefschetz \emph{pencil} in the literature, which was given by Smith in \cite{SmithGenus3}[Theorem~1.3] without appealing to Donaldson's theorem.\footnote{In contrast, there are many non-holomorphic \emph{fibrations} obtained by twisted fiber sums; e.g. \cite{OzbagciStipsicz,Korkmaz,BaykurHolomorphic}.  However decomposable fibrations do not admit $(-1)$-sections ---which could be blown-down to produce pencils \cite{StipsiczFiberSum}.} (We learn from Naoyuki Monden that in a joint work with Hamada and Kobayashi they produce two more examples.) Theorem~\ref{nonholom} improves the situation, at least quantitatively.
\end{remark}

\begin{remark}  \label{pi1GenusInvariant}
Following the works of Donaldson \cite{Donaldson} and Gompf \cite{Gompf}, every finitely presented group is the fundamental group of a symplectic Lefschetz pencil. (Also see \cite{ABKP} and \cite{Korkmaz2}.) An invariant of finitely presented group $G$ is the $m_g(G)$, the minimal genus of a genus-$g$ pencil $(X,f)$ with $\pi_1(X)=G$, which is not as easy to calculate in general. However, Theorem~\ref{nonholom} shows that for $G \cong  (\Z \, / {m_1 \, \Z}) \oplus (\Z \, / {m_2 \, \Z})$, $m(G) \leq 3$. Well-known examples of genus-$1$ and genus-$2$ pencils show that $m_g(G)=1$ for $G=1$ (the pencil on $\CP$), and $m_g(G)=2$ for $G= \Z_2$ (the pencil on the Enriques surface) or $\Z \oplus \Z$ (e.g. Matsumoto's pencil on $S^2 \x T^2$). It seems likely that for all the other $G$ as above, $m_g(G)=3$.  
\end{remark}

\vspace{0.1in}
\section{A new construction of symplectic Calabi-Yau surfaces with $b_1>0$}

Here we will give a new construction of an infinite family of symplectic Calabi-Yau surfaces with $b_1=2$ and $3$, along with a symplectic Calabi-Yau surface with $b_1=4$, homeomorphic to the $4$-torus. This will follow from our construction of new positive factorization of boundary multi-twists in $\Gamma_3^4$ corresponding to symplectic genus-$3$ Lefschetz pencils. 

We find it more convenient for presentation purposes to discuss a special example first, and discuss the most general construction afterward. These will correspond to our examples with $b_1=4$, and then the others with $b_1=2$ and $3$.

\smallskip
\subsection{A symplectic Calabi-Yau homeomorphic to the $4$-torus}  \label{Sec:4torus} \

Our main building block will be the following relation in $\Gamma_2^2$ \cite{Hamada}:
\begin{eqnarray} \label{firstlift}
(t_{B_0} t_{B_1} t_{B_2} t_{C} )^2 &=& t_{\delta_1} t_{\delta_2}  \, ,
\end{eqnarray}
where $\delta_i$ are the boundary parallel curves, and the curves $B_i$ and $C$ are as shown on the left-hand side of Figure~\ref{Matsumoto}. 

\begin{figure}[p!]
 \centering
     \includegraphics[width=10cm]{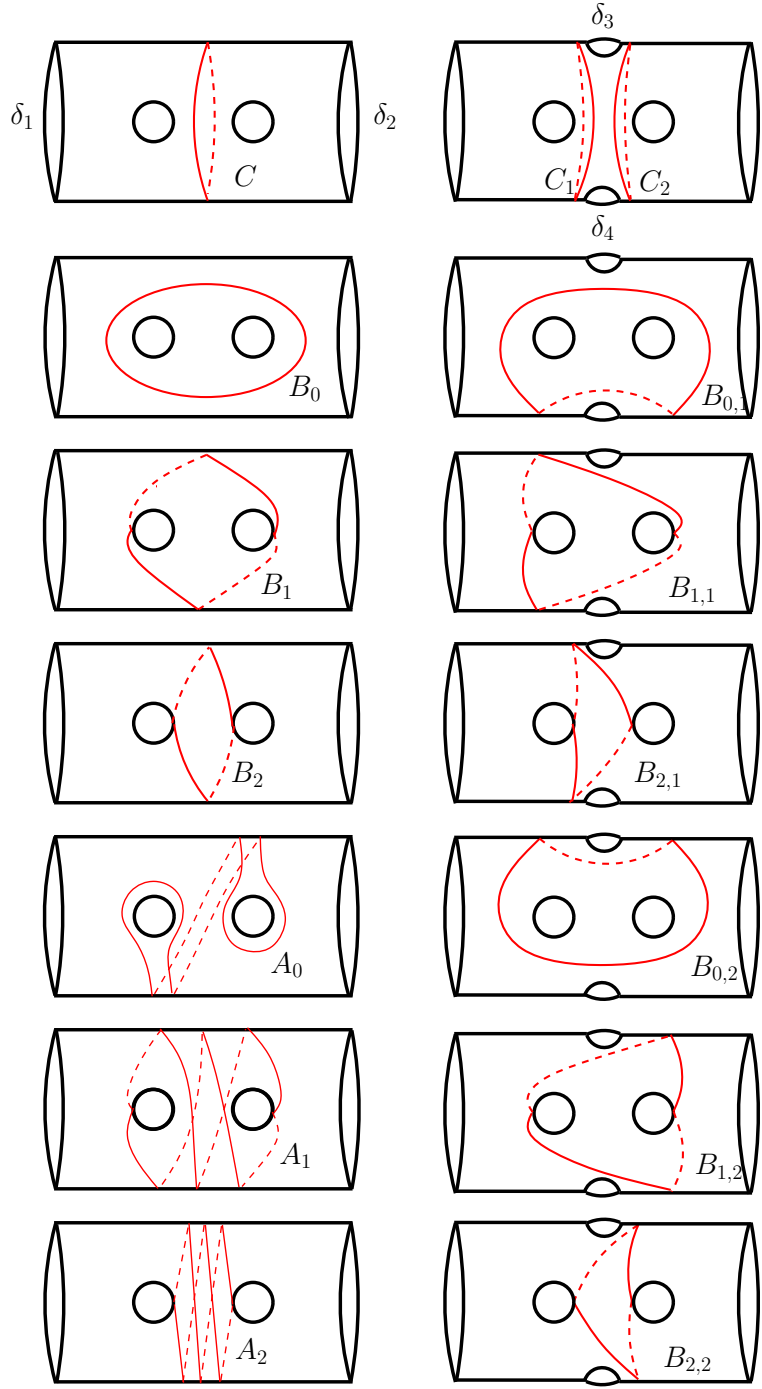}
\vspace{0.2in}
     \caption{Vanishing cycles $B_j, C, B_{j,i}$ and $C_i$ in Hamada's lift of Matsumoto's fibration. On the left are the curves of the positive factorization in $\Gamma_2^2$, along with the curves $A_j$ we get after Hurwitz moves. On the right are the curves of the further lift in $\Gamma_2^4$. }
     \label{Matsumoto}
\end{figure}

The relation $(\ref{firstlift})$ can be rewritten as
\begin{eqnarray*}
t_{B_0} t_{B_1} t_{B_2} t_{A_0} t_{A_1} t_{A_2} t_{C}^2 t_{\delta_1}^{-1} t_{\delta_2}^{-1} &=&  1  \, ,
\end{eqnarray*}
where each $A_j= t_C(B_j)$ for $j=0,1,2$  are given on the left-hand side Figure~\ref{Matsumoto}). We can embed this relation into $\Gamma_3$ by gluing along the two boundary components $\delta_1$ and $\delta_2$ so that $\delta_1=\delta_2$ is mapped to $C'$ and the curves $C, B_j, A_j$ are as shown in Figure~\ref{SCYcurves}, denoted by the same letters for simplicity. Thus the following holds in $\Gamma_3$: 
\begin{eqnarray} \label{embedSCY1}
t_{B_0} t_{B_1} t_{B_2} t_{A_0} t_{A_1} t_{A_2} t_{C}^2 t_{C'}^{-2} &=&  1  \, ,
\end{eqnarray}
or $P \, t_{C}^2 t_{C'}^{-2}=   1$, where $\underline{P= t_{B_0} t_{B_1} t_{B_2} t_{A_0} t_{A_1} t_{A_2}} $. Clearly $P, t_C$ and $t_{C'}$ all commute with each other. 

A similar embedding into $\Gamma_3$ can be given by mapping $\delta_1=\delta_2$ to $C$ instead, where the curves $C, B_j, A_j$ are now as shown in  Figure~\ref{SCYcurves}, all decorated by a prime notation this time (so $C$ indeed maps to $C'$ above). We thus get another relation in $\Gamma_3$ of the form:
\begin{eqnarray} \label{embedSCY2}
t_{B'_0} t_{B'_1} t_{B'_2} t_{A'_0} t_{A'_1} t_{A'_2} t_{C'}^2 t_{C}^{-2} &=&  1  \, ,
\end{eqnarray}
or $P' \, t_{C'}^2 t_{C}^{-2}=   1$, where $\underline{P'= t_{B'_0} t_{B'_1} t_{B'_2} t_{A'_0} t_{A'_1} t_{A'_2}} $. Similarly, $P', t_C$ and $t_{C'}$ all commute with each other. 

Combining the above, we get a positive factorization $\underline{W=P P'}$ in $\Gamma_3$, since
\[  P P' =  P \, t_{C}^2 t_{C'}^{-2} \, P' \, t_{C'}^2 t_{C}^{-2} = 1 \, . \]
Let $(\widetilde{X}, \widetilde{f})$ be the genus-$3$ Lefschetz fibration prescribed by $W$. We can equip $(\W{X}, \W{f})$ with a Gompf-Thurston symplectic form $\W{\omega}$, in particular $\W{X}$ is a symplectic $4$-manifold. We will show that $\widetilde{X}$ is the $4$ times blow-up of a symplectic Calabi-Yau surface $X$ homeomorphic to the $4$-torus. We will indeed show that it is a blow-up of a symplectic genus-$3$ pencil $(X,f)$, which we will first argue implicitly below, and then explicitly in the next section.

\begin{lemma}\label{SCYpencil}
Let $(\W{X},\W{f})$ be a genus $g \geq 2$ Lefschetz fibration and $c_1^2(\W{X})=2-2g$. If $\W{X}$ is not a rational or ruled surface, then $(\W{X},\W{f})$ is a blow-up of a symplectic Lefschetz pencil $(X,f)$, where $X$ is a (minimal) symplectic Calabi-Yau surface.
\end{lemma}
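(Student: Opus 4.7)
The plan is to use the strict negativity of $c_1^2$ to force non-minimality, represent the exceptional classes by disjoint $\W{J}$-holomorphic symplectic $(-1)$-spheres, and then invoke a Sato-type Kodaira dimension bound to identify these spheres as $(-1)$-sections whose simultaneous blow-down produces the desired pencil.

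First I equip $(\W{X},\W{f})$ with a Gompf--Thurston symplectic form. Since $g \geq 2$, we have $c_1^2(\W{X}) = 2-2g < 0$. By Liu's theorem, any minimal symplectic $4$-manifold that is not rational or ruled satisfies $K^2 \geq 0$; as $\W{X}$ is by hypothesis not rational or ruled, it cannot be minimal, so I can write $\W{X} \cong X_{\min} \# k\,\CPb$ for a minimal symplectic $X_{\min}$ with $k \geq 1$. Then $K_{X_{\min}}^2 = (2-2g)+k \geq 0$ forces $k \geq 2g-2$.

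Next I take a generic $\omega$-compatible almost complex structure $\W{J}$ with respect to which the fibers of $\W{f}$ are $\W{J}$-holomorphic. By Taubes, the $k$ exceptional classes are represented by pairwise disjoint, embedded $\W{J}$-holomorphic symplectic $(-1)$-spheres $S_1,\ldots,S_k$. Relative minimality forbids any $S_j$ from lying in a fiber, so positivity of intersections gives $S_j\cdot F \geq 1$, whence $\sum S_j \cdot F \geq k \geq 2g-2$. Applying the Sato-type bound \cite{SatoKodaira} referenced (in its weaker $\geq 2g-3$ incarnation) in Remark~\ref{smallest}, this intersection lower bound yields $\kappa(X_{\min}) = \kappa(\W{X}) \leq 0$; combined with the hypothesis that $\W{X}$ is not rational or ruled, $\kappa(\W{X})=0$, so $X_{\min}$ is a minimal symplectic Calabi-Yau surface. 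In particular $K_{X_{\min}}=0$, so $K_{X_{\min}}^2=0$ forces $k=2g-2$; adjunction gives $K_{\W{X}}\cdot F = 2g-2$, and the blow-up decomposition $K_{\W{X}} = \pi^{*}K_{X_{\min}} + \sum [S_j]$ then yields $\sum S_j\cdot F = 2g-2$, so each $S_j$ is a $(-1)$-section. Simultaneously blowing down the disjoint symplectic $(-1)$-sections $S_1,\ldots,S_{2g-2}$ produces a symplectic $4$-manifold $X = X_{\min}$ carrying a symplectic Lefschetz pencil $(X,f)$ whose $2g-2$ base points are the images of the $S_j$ and whose blow-up is $(\W{X},\W{f})$.

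The main obstacle I foresee is justifying the Sato-type bound in the sharp form $\sum S_j \cdot F \geq 2g-2 \Rightarrow \kappa \leq 0$, since Remark~\ref{smallest} explicitly cites only the weaker $\geq 2g-3 \Rightarrow \kappa \leq 1$. My backup plan is to rule out $\kappa=1$ directly by a light-cone argument: the equalities $K_{X_{\min}}^2=0$ and $K_{X_{\min}}\cdot\pi_{*}F = 0$ forced by adjunction, together with the positive self-intersection and positive symplectic area of the descended fiber class $\pi_{*}F$ in $X_{\min}$, would contradict $K_{X_{\min}}\cdot[\omega_{\min}] > 0$.
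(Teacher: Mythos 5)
Your proposal is correct and follows essentially the same route as the paper: a Gompf--Thurston form, Taubes/Liu forcing non-minimality with at least $2g-2$ disjoint exceptional spheres, $J$-holomorphic positivity giving $(\sum_j S_j)\cdot F \geq 2g-2$, Sato's Kodaira-dimension bound forcing $\kappa \leq 0$, hence $\kappa=0$ and a minimal symplectic Calabi--Yau model, and finally a count showing each sphere is a $(-1)$-section which is blown down to produce the pencil. The paper in fact cites Sato's result in the sharp form ($\geq 2g-2 \Rightarrow \kappa \leq 0$, with \cite{SatoKodaira, BaykurHayano}), so the worry you flagged is settled by citation rather than by your light-cone backup (which would anyway require care when $b^+>1$); your derivation of $\sum_j S_j\cdot F = 2g-2$ via the torsion canonical class and the blow-up formula is a slightly more self-contained version of the paper's counting step.
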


\begin{proof}
We can equip $(\W{X},\W{f})$ with a Gompf-Thurston symplectic form $\W{\omega}$. By the work of Taubes \cite{Taubes, Taubes2}, any \emph{minimal} symplectic $4$-manifold has non-negative $c_1^2$. So $\W{X} \cong X_0 \# m \CPb$, where $X_0$ is a minimal model for $\W{X}$ and $m \geq 2g-2$, which means that $\W{X}$ contains at least $2g-2$ disjoint exceptional spheres $S_j$, $j=1, \ldots, 2g-2$. There is an $\W{\omega}$-compatible almost complex structure $J$ with respect to which both a regular fiber $F$ of $\W{f}$ and all $S_j$ are $J$-holomorphic \cite{SatoKodaira, BaykurPAMS}, which implies that $(\sum_{j=1}^{2g-2} S_j) \cdot F \geq 2g-2$. By the work of Sato \cite{SatoKodaira} (also see \cite{BaykurHayano}), this is only possible if the symplectic Kodaira dimension $\kappa(\W{X}) \leq 0$. As we assumed $\W{X}$ is not a rational or a ruled surface, we have $\kappa(\W{X}) \neq - \infty$ \cite{LiSCY}, leaving the only possibility as $\kappa(\W{X})=0$ and also that there are no other disjoint exceptional spheres than $S_j$ in $\W{X}$. Thus $(\sum_{j=1}^{2g-2} S_j) \cdot F = 2g-2$, so one can blow-down these exceptional spheres (which can be assumed to be symplectic after deforming $\W{\omega}$ if needed \cite{GompfStipsicz}) to arrive at a symplectic genus-$3$ Lefschetz pencil $(X,f)$, where $X= X_0$ is a minimal symplectic Calabi-Yau surface.
\end{proof}

We show that $\W{X}$ satisfies the hypotheses of this lemma. 

The Euler characteristic of $\W{X}$ is given by $\eu(\W{X})= 4-4 \cdot 3 + \ell(W)= 4$, where $\ell(W)=12$ is the number of Dehn twists in the factorization $W$. 

The signature of $\W{X}$ can be calculated as $\sigma(\W{X})=-4$  using the local signatures and  Ozbagci's algorithm \cite{Ozbagci}. Since $(\W{X}, \W{f})$ is not hyperelliptic, we do not have the luxury of employing Endo's closed formula for a direct calculation, however the following observation allows us to deduce the signature from a calculation in $\Gamma_2$, where any factorization is hyperelliptic: We have obtained $W$ from the factorization which had two pairs of $t_C$ and $t_{C'}$ with opposite signs. This is an \emph{achiral} genus-$3$ Lefschetz fibration, which is a fiber sum of two achiral fibrations prescribed by the factorizations $P \, t_{C}^2 t_{C'}^{-2}$ and $P' \, t_{C'}^2 t_{C}^{-2}$. Each one is a \emph{section sum} of an achiral genus-$2$ Lefschetz fibration and a trivial torus fibration along a section of self-intersection zero, which, by  Novikov additivity, has signature equals to that of the achiral genus-$2$ Lefschetz fibration. By the hyperelliptic signature formula for genus-$2$ fibrations \cite{Matsumoto, Endo}, the signature of the latter is $-2$. Invoking Novikov additivity again, we conclude that the decomposable achiral genus-$3$ fibration has signature $-4$, the sum of the signature of its summands. Lastly, we claim that canceling the pairs $t^{\pm 2}_C$ and $t^{\pm 2}_{C'}$ does not change the signature. This can be easily seen by splitting the fibration as  $t^2_C t^{-2}_C$ (or $t^2_{C'} t^{-2}_{C'}$) and the rest. The piece with monodromy factorization $t^2_C t^{-2}_C$ has zero signature, which is easily calculated from the induced handle decomposition, or by observing that it can be further split into two identical fibrations whose total spaces have opposite orientations (and thus opposite signatures). By Novikov additivity once again, the rest of the fibration has the same signature as the total signature of the achiral fibration. So $\sigma(\W{X})=-4$.

It follows that $c_1^2(\W{X})= 2 \eu(\W{X}) + 3\sigma(\W{X})=-4$. Assuming $\pi_1(\W{X})=\Z^4$ for the moment, let us complete our arguments regarding the homeomorphism type of the minimal model $X$ of $\W{X}$. By Lemma~\ref{SCYpencil} $(\W{X}, \W{f})$ is a blow-up of a symplectic genus-$3$ Lefschetz pencil $(X,f)$, where $X$ is a symplectic Calabi-Yau surface. Since $X$ is a symplectic Calabi-Yau surface with $b_1(X)>0$, it has the rational homology type of a torus bundle over a torus \cite{Bauer, LiSCY}. Moreover, since $\pi_1(X)=\Z^4$ is a virtually poly-$\Z$ group, the Borel conjecture holds in this case by the work of Farrell and Jones \cite{FJ90}, and as observed by Friedl and Vidussi, this implies that a symplectic Calabi Yau surface with $\pi_1(\Z^4)$ is unique up to homeomorphism \cite{FriedlVidussi}. Hence $X$ is homeomorphic to the well-known symplectic Calabi-Yau surface, the $4$-torus. 

We now calculate $\pi_1(X)=\pi_1(\W{X})$, which will constitute the rest of this section. Using the standard handlebody decomposition for $(\W{X}, \W{f})$, we can obtain a finite presentation for $\pi_1(\W{X}) \cong \pi_1(\Sigma_3) \, / \, N$, where $N$ denotes the subgroup of $\pi_1(\Sigma_3)$ generated normally by the vanishing cycles of $\W{f}$. For $\{a_j, b_j\}$ standard generators of $\pi_1(\Sigma_g)$ as given in Figure~\ref{pi1generators}, we get
\[ \pi_1(X) \cong \, \langle \, a_1, b_1, a_2, b_2, a_3, b_3 \, | \, [a_1, b_1] [a_2, b_2] [a_3, b_3] , R_1, \ldots, R_{12} \, \rangle \, , \]
where each $R_i$ is a relation obtained by expressing a corresponding vanishing cycle (oriented arbitrarily) in  $\{a_j, b_j\}$. Recall that $(\W{X}, \W{f})$ has a section (in fact $4$ \linebreak $(-1)$-sections), so the above presentation is complete, there is no other relation.

\begin{figure}[ht!]
 \centering
     \includegraphics[width=5cm]{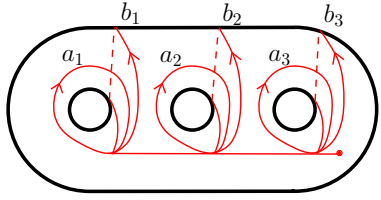}
     \caption{Generators $a_j, b_j$ of $\pi_1(\Sigma_3)$}
     \label{pi1generators}
\end{figure}

We have the following relations, the ones after the first one coming from $\Sigma_3$ all induced by the vanishing cycles $B_0$, $B_1$, $B_2$, $A_0$, $A_1$, $A_2$, $B'_0$, $B'_1$, $B'_2$, $A'_0$, $A'_1$, $A'_2$ (which are all given in Figure~\ref{SCYcurves}, in the given order:
 \begin{eqnarray}
& \phantom{o} &  [a_1, b_1][a_2, b_2][a_3,b_3]=1 ,      \label{0} \\  
& \phantom{o} &  a_1 a_3=1,      \label{B0} \\  
& \phantom{o} &  a_1  \bar{b}_1   a_2 b_2 \bar{a}_2 a_3 \bar{b}_3=1,      \label{B1} \\  
& \phantom{o} &   \bar{b}_1 a_2 b_2 \bar{a}_2 \bar{b}_3 =1, \label{B2} \\  
& \phantom{o} &  a_1 [b_3, a_3] \, b_2 a_3 \bar{b}_2 \, [a_3, b_3]=1,      \label{A0} \\  
& \phantom{o} &   a_3 \bar{b}_3 \bar{b}_2 [a_3, b_3] \, a^2_1 \bar{b}_1 \bar{a}_1 [b_3, a_3] \, b_2 [b_3, a_3] \, b_2=1,      \label{A1} \\  
& \phantom{o} &  a_1  \bar{b}_1 \bar{a}_1 [b_3, a_3] \, b_2 [b_3, a_3] \, b_2 \bar{b}_3 \bar{b}_2 [a_3, b_3]   =1, \label{A2} \\  
& \phantom{o} &  \bar{a}_2 a_1 a_2 a_3=1,      \label{B'0} \\  
& \phantom{o} &  a_1 \bar{b}_1 a_2 a^2_3 \, \bar{b}_3 \bar{a}_3 b_2 \bar{a}_2 =1,      \label{B'1} \\  
& \phantom{o} &   b_1 a_2 \bar{b}_2 a_3 b_3 \bar{a}_3 \bar{a}_2 =1, \label{B'2} \\  
& \phantom{o} &  a_1 a_2 \bar{b}_2 a_3 b_2 \bar{a}_2=1,      \label{A'0} \\  
& \phantom{o} & a_1 \bar{b}_1 a_2 \bar{b}_2 a^2_3 \, \bar{b}_3 \bar{a}_3 b^2_2 \, \bar{a}_2=1,      \label{A'1} \\  
& \phantom{o} &  \bar{b}_1 a_2 \bar{b}_2 a_3 \bar{b}_3 \bar{a}_3 b^2_2 \, \bar{a}_2 =1, \label{A'2} 
\end{eqnarray}
Here $\bar{x}$ denotes the inverse of the element $x$ in the group. 

\begin{figure}[p!]
 \centering
     \includegraphics[height=18.5cm]{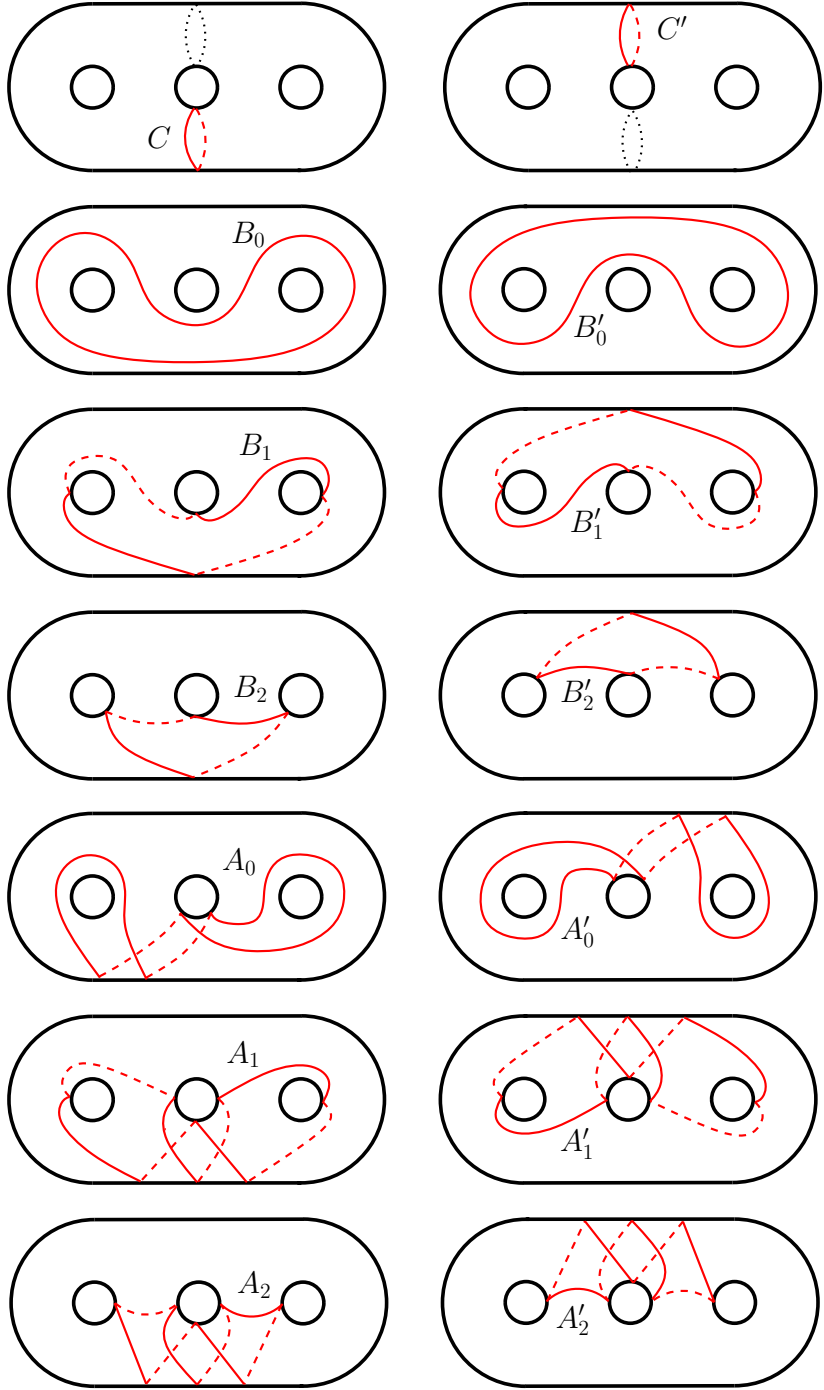}
\smallskip
     \caption{Vanishing cycles $B_j, A_j, B'_j, A'_j$ of the genus-$3$ fibration $(X,f)$. On the left are the curves coming from the factorization $P$ and on the right are those coming from the factorization $P'$, which correspond to the two different embeddings of the factorization in $\Gamma_2^2$ into $\Gamma_3$. (Dotted lines drawn on the surfaces on the top are the images of $\delta_1=\delta_2$ under these embeddings.)}
     \label{SCYcurves}
\end{figure}

When abelianized, the relations coming from each one of the triples $\{B_0, B_1, B_2\}$, $\{A_0, A_1, A_2\}$, $\{B'_0, B'_1, B'_2\}$, $\{A'_0, A'_1, A'_2\}$ give the same three relations 
\begin{eqnarray*}
& \phantom{o} &  a_1 +  a_3= 0,     \\
& \phantom{o} &  a_1  -b_1 + b_2 + a_3 - b_3= 0,    \\
& \phantom{o} & -b_1 +b_2 -b_3 =0, 
\end{eqnarray*}
where we identified the images of the generators with the same letters.  Any $2$ of these relations imply the other. So we can eliminate $a_1=-a_3$ and $b_1=b_2-b_3$, and get a free abelian group of rank $4$ generated by $a_2, b_2, a_3$ and $b_3$. 

Now, going back to the presentation we have above for $\pi_1(\W{X})$, we easily see that it is also generated by $a_2, b_2, a_3, b_3$, since $a_1= \bar{a}_3$ by $(\ref{B0})$ and $b_1 = a_2 b_2 \bar{a}_2 \bar{b}_3$ by $(\ref{B2})$. To prove $\pi_1(\W{X})=\Z^4$, it therefore suffices to show that $a_2, b_2, a_3, b_3$ all commute with each other. 

Replacing $a_1$ with $\bar{a}_3$ in $(\ref{B'0})$ gives $[a_3, a_2]=1$. From  $(\ref{B2})$ we have $\bar{b}_1 a_2 b_2 \bar{a}_2 = b_3$. Substituting this in $(\ref{B1})$, and replacing $a_1$ with $\bar{a}_3$, we get $[a_3, b_3]=1$. With  $a_1=\bar{a}_3$ and $[a_3, b_3]=1$, the relation $(\ref{A0})$ simplifies to $[a_3, b_2]=1$. So $a_3$ commute with each one of $a_2, b_2, b_3$. 
We can rewrite $(\ref{A'0})$ as $a_1 a_2 \bar{b}_2 a_3 b_2 \bar{a}_2= a_2 \bar{b}_2 a_3 b_2 \bar{a}_2 =1$, since  $a_1=\bar{a}_3$ and $a_3$ commutes with $a_2$ and $\bar{b}_2$. So
$[a_2, b_2]=1$. It implies that  $b_1 = a_2 b_2 \bar{a}_2 \bar{b}_3 = b_1 \bar{b}_3$. Note that $[a_3, b_3]=1$ means $a_3 b_3 \bar{a}_3 = b_3$. Using these last two identities and $(\ref{B'2})$ we derive $b_2 \bar{b}_3 a_2 \bar{b}_2 b_3 \bar{a}_2 =1$. So $[b_2, \bar{b}_3 a_2] =[a_2, \bar{b}_2 b_3]=1$. These commuting relations, together with $[a_2, b_2]=1$, give us the last two commuting relations we need: $[b_2, b_3]=[a_2, b_3]=1$. 

Hence $\pi_1(\W{X}) \cong \Z^4$, generated by $a_2, b_2, a_3, b_3$. This completes our claim that $(X,f)$ is a symplectic genus-$3$ Lefschetz pencil, where $X$ is a symplectic Calabi-Yau surface homeomorphic to the $4$-torus.

\smallskip
\subsection{Symplectic Calabi-Yau surfaces with $b_1=2$ and $3$ via pencils}  \

We will now generalize the construction in the previous section, and construct an infinite family of examples of genus-$3$ Lefschetz pencils on symplectic Calabi-Yau surfaces with $b_1=2$ and $3$. We will produce these pencils explicitly via positive factorizations of the boundary multi-twist $\Delta= t_{\partial_1} t_{\partial_2} t_{\partial_3} t_{\partial_4}$ in $\Gamma_2^4$, where $\partial_i$ are boundary parallel curves for distinct boundary components of $\Sigma_3^4$. For this, we will use a further lift of the monodromy factorization of Matsumoto's genus-$2$ fibration to $\Gamma_2^4$ given by Hamada, which will allow us to obtain a nice and symmetric presentation at the end. 

The factorization $(\ref{firstlift})$ has a further lift to $\Gamma_2^4$ \cite{Hamada} of the form:
\begin{eqnarray} \label{secondlift}
t_{B_{0,1}} t_{B_{1,1}} t_{B_{2,1}} t_{C_1}  \, t_{B_{0,2}} t_{B_{1,2}} t_{B_{2,2}} t_{C_2}  &=& t_{\delta_1} t_{\delta_2} t_{\delta_3} t_{\delta_4}  \, ,
\end{eqnarray}
where again $\delta_i$ are the boundary parallel curves, and the curves $B_{j,i}, C_i$ are given on the right-hand side of Figure~\ref{Matsumoto}. When the two boundary components $\delta_3, \delta_4$, drawn in the middle of $\Sigma_2^4$ in Figure~\ref{Matsumoto}, are capped off, the curves  $B_{j,i}$ descend to $B_j$ and $C_i$ to $C$ for each  $j=0,1,2$ and $i=1,2$. 

\begin{figure}[ht!]
 \centering
     \includegraphics[width=5cm]{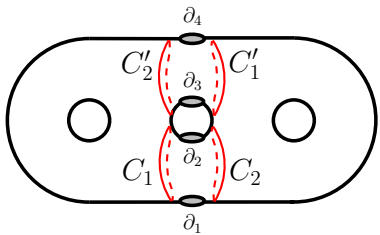}

     \caption{Curves involved in our embeddings of $\partial \Sigma_4^2$ into $\partial \Sigma_3^4$.}
     \label{SCYgluing}
\end{figure}

The relation $(\ref{secondlift})$ can be rewritten as the following relation in $\Gamma_2^4$:
\begin{eqnarray*}
t_{B_{0,1}} t_{B_{1,1}} t_{B_{2,1}} \, t_{A_{0,2}} t_{A_{1,2}} t_{A_{2,2}} t_{C_1} t_{C_2} t^{-1}_{\delta_1} t^{-1}_{\delta_2}    &=&  t_{\delta_3} t_{\delta_4}  \, ,
\end{eqnarray*}
where each $A_{j,2}= t_{C_1}(B_{j,2})$ for $j=0,1,2$. Embed this configuration of curves on $\Sigma_2^4$ into $\Sigma_3^4$ by gluing a cylinder with $2$ holes along two of the boundary components of $\Sigma_3^4$. Choose the embedding so that $\delta_1$ maps to $C'_2$, $\delta_2$ to $C'_1$, $\delta_3$ to $\partial_2$ and $\delta_4$ to $\partial_1$. (See Figure~\ref{SCYgluing}.) We then map the interior of $\Sigma_2^4$ so that the curves $B_{j,1}$ and $A_{j,2}$ all map to the curves $B_j$ and $A_j$ in Figure~\ref{SCYcurves} when the boundary components $\partial_3$ and $\partial_4$ are capped off. (That is, we map the interior so that when $\partial \Sigma_3^4$ is capped off, we get the first embedding we had in the previous section.)  Thus the following holds in $\Gamma_3^4$: 
\begin{eqnarray} \label{embedSCY3}
t_{B_{0,1}} t_{B_{1,1}} t_{B_{2,1}} \, t_{A_{0,2}} t_{A_{1,2}} t_{A_{2,2}} t_{C_1} t_{C_2} t^{-1}_{C'_2} t^{-1}_{C'_1}    &=&  t_{\partial_1} t_{\partial_2}  \, ,
\end{eqnarray}
or $\W{P} \, t_{C_1} t_{C_2} t^{-1}_{C'_2} t^{-1}_{C'_1} =   1$, where $\underline{\W{P}=t_{B_{0,1}} t_{B_{1,1}} t_{B_{2,1}} \, t_{A_{0,2}} t_{A_{1,2}} t_{A_{2,2}}}$. 

A similar embedding into $\Gamma_3^4$ can be given by mapping $\delta_1$ to $C_2$, $\delta_2 \to C_1$, $\delta_3 \to \partial_3$ and $\delta_4 \to \partial_4$ (see Figure~\ref{SCYgluing}, where the interior is mapped in a similar fashion as above so as to get the curves $B'_j$ and $A'_j$ of Figure~\ref{SCYcurves}. We get another relation in $\Gamma_3^4$ of the form:
\begin{eqnarray} \label{embedSCY2}
t_{B'_{0,1}} t_{B'_{1,1}} t_{B'_{2,1}} \, t_{A'_{0,2}} t_{A'_{1,2}} t_{A'_{2,2}}   t_{C'_1} t_{C'_2} t^{-1}_{C_2} t^{-1}_{C_1}    &=&  t_{\partial_3} t_{\partial_4}   \, ,
\end{eqnarray}
or $\W{P}' \,  t_{C'_1} t_{C'_2} t^{-1}_{C_2} t^{-1}_{C_1}  =  1$, where $\underline{\W{P}'=t_{B'_{0,1}} t_{B'_{1,1}} t_{B'_{2,1}} \, t_{A'_{0,2}} t_{A'_{1,2}} t_{A'_{2,2}}}$. 

Now let $\phi$ be any mapping class in $\Gamma_3^4$ which fix the curves $C_1, C_2, C'_1, C'_2$. Then the product of $\W{P}^{\phi}$ and $\W{P}'$ then give:
\begin{align*}
& \phantom{o} & \W{P}^{\phi} \W{P}'  &= \W{P}^{\phi} \,  t_{C_1} t_{C_2} t^{-1}_{C'_2} t^{-1}_{C'_1} \ \W{P}'   t_{C'_1} t_{C'_2} t^{-1}_{C_2} t^{-1}_{C_1} \\
& \phantom{o} & &=  (\W{P} \,  t_{C_1} t_{C_2} t^{-1}_{C'_2} t^{-1}_{C'_1})^{\phi} \ \W{P}'   t_{C'_1} t_{C'_2} t^{-1}_{C_2} t^{-1}_{C_1} \\
& \phantom{o} & &= t_{\partial_1} t_{\partial_2} t_{\partial_3} t_{\partial_4}  \, .
\end{align*}
In the first equality we used the commutativity of disjoint Dehn twists $t_{C_1}$, $t_{C_2}$, $t_{C'_1}$, $t_{C'_2}$ and that they all commute with  $\W{P}$ (and $\W{P}'$). The second equality comes from our choice of $\phi$ so that $\phi$ commutes with Dehn twists along $C_1$, $C_2$, $C'_1$, $C'_2$.  Therefore $\underline{W_{\phi}= \W{P}^\phi \W{P}'}$ is a positive factorization of the boundary multi-twist $\Delta=t_{\partial_1} t_{\partial_2} t_{\partial_3} t_{\partial_4}$ in $\Gamma_3^4$ for any $\phi$ in $\Gamma_3^4$ fixing the curves $C_1, C_2, C'_1, C'_2$ on $\Sigma_3^4$.

Letting $(X_{\phi}, f_{\phi})$ denote the symplectic genus-$3$ Lefschetz pencil corresponding to the positive factorization $W_{\phi}$, we can now state our general result as:

\begin{theorem} \label{SCYfamily}
Each $(X_{\phi}, f_{\phi})$ is a symplectic genus-$3$ Lefschetz pencil on a minimal symplectic Calabi-Yau surface, where $X$ is homeomorphic to the $4$-torus for $\phi=1$. The family of genus-$3$ Lefschetz pencils
\[ \{ (X_{\phi}, f_{\phi}) \ | \ \phi \in \Gamma_3^4 \text{ and keeps } C_1, C_2, C'_1, C'_2 \text{ fixed} \, \}\]
contains a subfamily 
\[ \{ (X_M, f_M) \ |  \ H_1(X_M) \cong \Z^2 \oplus (\Z \, / {m_1 \, \Z}) \oplus (\Z \, / {m_2 \, \Z}) \, , \text{ for } \, M=(m_1, m_2) \in \N \x \N,    \}. \] 
\end{theorem}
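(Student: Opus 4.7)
The plan is to verify three things in turn: every $(X_{\phi}, f_{\phi})$ is a symplectic genus-$3$ pencil on a minimal symplectic Calabi-Yau surface; the specific case $\phi = 1$ gives an $X$ homeomorphic to the $4$-torus; and among the $\phi$ allowed by the theorem there is a subfamily realizing each prescribed $H_1$. The first step is the cleanest: because $W_{\phi} = \widetilde{P}^{\phi}\widetilde{P}'$ is obtained from $W_1$ by a partial conjugation in the factor $\widetilde{P}$, the underlying abstract multiset of Dehn twists (up to simultaneous diffeomorphism) is unchanged, so the blow-up $(\widetilde{X}_{\phi}, \widetilde{f}_{\phi})$ has the same Euler characteristic and signature as the $\phi = 1$ fibration computed in Section~\ref{Sec:4torus}: $\chi(\widetilde{X}_{\phi}) = 4$ and $\sigma(\widetilde{X}_{\phi}) = -4$. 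Hence $c_1^2(\widetilde{X}_{\phi}) = 2\chi + 3\sigma = -4 = 2 - 2g$ for $g = 3$. To invoke Lemma~\ref{SCYpencil}, I must rule out $\widetilde{X}_{\phi}$ being rational or ruled. Ruled is immediate from $c_1^2 < 0$ being impossible after blowing down $(2g-2)$ exceptional spheres from a ruled manifold; for rational I would argue via $\pi_1$, using that the relators from the unconjugated factor $\widetilde{P}'$ alone already force $H_1$ to have positive rank (this part of the $\phi = 1$ computation is purely internal to $\widetilde{P}'$ and transfers verbatim). Blowing down the four resulting disjoint exceptional sections of $(\widetilde{X}_{\phi}, \widetilde{f}_{\phi})$ then yields the genus-$3$ pencil $(X_{\phi}, f_{\phi})$ on a minimal symplectic Calabi-Yau surface.

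For $\phi = 1$ the entire $\pi_1$ computation, showing $\pi_1(X) \cong \Z^4$ with generators $a_2, b_2, a_3, b_3$, has been carried out in Section~\ref{Sec:4torus}. Since $\Z^4$ is virtually poly-$\Z$, the Farrell--Jones theorem applies, and then Friedl--Vidussi's uniqueness result for symplectic Calabi-Yau surfaces with solvmanifold fundamental group identifies $X$ up to homeomorphism with the standard symplectic $4$-torus.

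For the subfamily, following the template of Theorem~\ref{nonholom}, I would take $\phi_M = t_{\gamma_1}^{m_1} t_{\gamma_2}^{m_2}$ where $\gamma_1, \gamma_2$ are simple closed curves on $\Sigma_3^4$ disjoint from each of $C_1, C_2, C_1', C_2'$ (so that $\phi_M$ preserves those four curves as required) and whose homology classes pair non-trivially with precisely the two $\Z$-summands of $H_1(\widetilde{X}_1) \cong \Z^4$ that I want to replace by torsion; natural candidates are parallel copies of $b_1$ and $a_2$ pushed into the appropriate subsurface. By the Picard-Lefschetz formula, each abelianized relator coming from $\widetilde{P}^{\phi_M}$ differs from its $\phi = 1$ counterpart by an integer multiple of $m_1 [\gamma_1]$ and $m_2 [\gamma_2]$, so the abelian presentation of Section~\ref{Sec:4torus} acquires exactly two extra relations of the form $m_1 \cdot (\text{generator}) = 0$ and $m_2 \cdot (\text{generator}) = 0$; the remaining four generators still satisfy the commuting relations derived from $\widetilde{P}'$, yielding $H_1(X_M) \cong \Z^2 \oplus (\Z/m_1) \oplus (\Z/m_2)$.

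The main obstacle will be the combinatorial step of exhibiting the right $\gamma_1, \gamma_2$: I need curves disjoint from all four $C$-type curves and whose intersection pattern with the standard generators produces precisely the two linearly independent $m_i$-torsion relations in $H_1$ without also collapsing the free $\Z^2$ part. The arguments in Section~\ref{Sec:4torus} that show the unconjugated relators commute the generators $a_2, b_2, a_3, b_3$ must survive intact after the conjugation is imposed, which is where the choice of $\gamma_i$ supported off the $C$-curves plays a role. Once a suitable pair $(\gamma_1, \gamma_2)$ is in hand, the homology calculation is a straightforward elaboration of (\ref{abconj1})--(\ref{abconj3}) in the earlier proof and will conclude the theorem.
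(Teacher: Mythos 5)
The central gap is in how you rule out the rational/ruled alternative, which is exactly the hypothesis needed to invoke Lemma~\ref{SCYpencil}. Your dismissal of the ruled case (``immediate from $c_1^2<0$ being impossible after blowing down $2g-2$ exceptional spheres'') produces no contradiction: a minimal ruled surface over $T^2$ blown up at four points has precisely $\eu=4$, $\sigma=-4$, $c_1^2=-4$, i.e.\ the same characteristic numbers as $\widetilde{X}_{\phi}$, and after blowing down the four sections one gets $c_1^2=0$, which is exactly the value of $S^2\times T^2$ and $S^2\widetilde{\times}T^2$. In fact the count $\eu=4$, $\sigma=-4$ gives $b^+=b_1-1$, so the only rational/ruled candidates surviving are these two minimal ruled surfaces over the torus (with $b_1=2$), and nothing numerical or homological excludes them. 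Your $\pi_1$ argument does not help here either, and is moreover logically backwards as stated: the relators coming from the unconjugated factor $\widetilde{P}'$ alone only exhibit $H_1(\widetilde{X}_{\phi})$ as a quotient of $\Z^4$, which is an upper bound on the rank; the conjugated relators could a priori kill more homology, and in any case ruled surfaces over $T^2$ have $b_1=2>0$, so positive rank excludes only the rational case (which also follows simply from $\eu(X_{\phi})=0$). The paper closes this gap with Lemma~\ref{ruledno}: an adjunction-formula computation showing that neither $S^2\times T^2$ nor $S^2\widetilde{\times}T^2$ contains an embedded symplectic genus-$3$ surface of self-intersection $4$, hence neither admits a genus-$3$ pencil with four base points. (Alternatively, Remark~\ref{CompareHoLi} gets minimality and $\kappa=0$ by realizing the partial conjugations as Luttinger surgeries, which preserve Kodaira dimension.) Without one of these arguments the first assertion of the theorem is unproved.

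The remaining parts of your proposal track the paper: the invariance of $\eu$, $\sigma$, $c_1^2$ under partial conjugation, the identification of the $\phi=1$ case with the computation of Section~\ref{Sec:4torus} plus Farrell--Jones and Friedl--Vidussi, and the Picard--Lefschetz scheme for the subfamily are all as in the paper's proof. For the subfamily, however, you leave the key choice of twisting curves open; the paper takes $\phi=t_{b_1}^{-m_1}t_{a_3}^{m_2}$, with $b_1$ and $a_3$ (not $a_2$, which need not be disjoint from $C_1,C_2,C'_1,C'_2$ in this embedding) visibly disjoint from the four $C$-curves, and then the abelianized relators from $\widetilde{P}^{\phi}$ add exactly $m_1b_1=0$ and $m_2a_3=0$ to the relations $a_1=-a_3$, $b_3=b_2-b_1$ from $\widetilde{P}'$, giving $H_1\cong\Z^2\oplus(\Z/m_1\Z)\oplus(\Z/m_2\Z)$. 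Your computational template would indeed finish once such a pair of curves is pinned down, so this is an incompleteness rather than an error; the ruled-surface exclusion is the genuine missing step.
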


We will appeal to the following in the proof of our theorem:

\begin{lemma} \label{ruledno}
The manifolds $S^2 \x T^2$ and $S^2 \W{\x} T^2$ do not admit any genus-$3$ Lefschetz pencils with $4$ base points. 
\end{lemma}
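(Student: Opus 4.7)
The plan is to reuse the adjunction-formula argument of Lemma~\ref{notrational}, this time played off against the canonical class of the two minimal ruled surfaces with base $T^2$.

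Suppose for contradiction that $(X,f)$ is a genus-$3$ Lefschetz pencil with exactly $4$ base points, where $X = S^2 \x T^2$ or $X = S^2 \W{\x} T^2$. By the Gompf-Thurston construction, $X$ carries a symplectic form making a regular fiber an embedded symplectic genus-$3$ surface; call its class $F \in H_2(X;\Z)$. The four base points give $F^2 = 4$, and the adjunction formula for an embedded symplectic surface yields
\[
2g-2 \,=\, F^2 + K_X \cdot F, \qquad \text{so} \qquad K_X \cdot F = 0.
\]
Here $K_X$ is well-defined up to sign thanks to the uniqueness (up to deformation and symplectomorphism) of the symplectic structure on a minimal ruled surface, due to Li and Liu.

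Next I would pin down $K_X$ in each case. For $X = S^2 \x T^2$, take a section class $s$ and ruling fiber class $f_r$ with $s^2 = f_r^2 = 0$ and $s \cdot f_r = 1$. Then $K_X \cdot f_r = -2$ (the ruling fiber is an $S^2$) and $K_X \cdot s = 0$ (adjunction on the genus-$1$ section with $s^2 = 0$) force $K_X = -2s$. For $X = S^2 \W{\x} T^2$, pick a section $S$ with $S^2 = 1$ and the ruling fiber $f$ with $S \cdot f = 1$; then $K_X \cdot f = -2$ and $K_X \cdot S = -1$ (adjunction on the genus-$1$ section with $S^2 = 1$) give $K_X = -2S + f$.

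Finally, writing $F = p s + q f_r$ (resp.\ $F = p S + q f$), I would solve the two constraints. In $S^2 \x T^2$ one gets $K_X \cdot F = -2q = 0$, hence $q = 0$, so $F^2 = 2pq = 0 \neq 4$. In $S^2 \W{\x} T^2$ one gets $K_X \cdot F = -p - 2q = 0$, so $p = -2q$ and $F^2 = p^2 + 2pq = 4q^2 - 4q^2 = 0 \neq 4$. Either way no integral class $F$ satisfies both requirements, contradicting the assumed pencil. The only point needing care is that the canonical-class adjunction truly applies in this ruled setting, which is exactly what the uniqueness of symplectic structures on ruled surfaces supplies; the rest is elementary arithmetic.
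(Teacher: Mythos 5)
Your argument is correct and is essentially the paper's own proof: both reduce the statement to the nonexistence of an embedded symplectic genus-$3$ surface of square $4$, invoke Li--Liu uniqueness of the symplectic structure on these minimal ruled surfaces to apply adjunction with the standard canonical class, and then rule out any integral class by the same arithmetic in $H_2$. The only cosmetic difference is that you re-derive $K_X$ via adjunction on the ruling fiber and a section, whereas the paper simply uses the known canonical classes $-2T$ and $S-2T$.
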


\begin{proof}
We will show that neither one of these ruled surfaces contains an embedded symplectic surface $F$ of genus $3$ and self-intersection $4$, whereas the fiber of a pencil as in the statement of lemma would give such an $F$.  

For $X= S^2 \x T^2$, $H_2(X) \cong \Z^2$ is generated by $S= S^2 \x \{ pt\}$ and $T= \{ pt\} \x T^2$, where $S \cdot S=0$, $T \cdot T=0$, and $S \cdot T=1$. Now let $F= a S + bT$ be a symplectic curve of genus $3$ with $F^2=4$. So $4= F^2= 2ab$, which implies $ab=2$. Since there is a unique symplectic structure on a minimal ruled surface up to deformations and symplectomorphisms \cite{LiLiu}, we can apply the adjunction formula and derive 
\[ 4= \eu(F)= F^2 + K_{X} \cdot F = 4 + (-2T) \cdot (aS+bT) = 4- 2a \, , \]
and get $a=0$. It however contradicts with $ab=2$ above, so there is no such $F$.

For $X=S^2 \W{\x} T^2$, $H_2(X) \cong \Z^2$ is generated by the fiber $S$ and section $T$ of the degree-$1$ ruling on $X$, where $S \cdot S=0$, $T \cdot T=1$, and $S \cdot T=1$. Let $F= a S + bT$ be a symplectic curve of genus $3$ with $F^2=4$. So $4= F^2= 2ab +b^2$, which implies $b(2a+b)=4$. By the same argument as above, we can apply the adjunction to derive
\[ 4= \eu(F)= F^2 + K_{X} \cdot F = 4 + (S-2T) \cdot (aS+bT) = 4 -2a-b \, , \]
and get $2a+b=0$, which contradicts with $b(2a+b)=4$. So there is no such $F$ in this case either. 
\end{proof}

We can now prove our theorem:

\begin{proof}[Proof of Theorem~\ref{SCYfamily}]
For $\phi=1$ we get the pencil $(X,f)$ constructed in the previous section, now with an explicit lift of its monodromy to $\Gamma_3^4$ as a positive factorization $\W{P} \W{P}'= \Delta$. Since any $(X_{\phi}, f_{\phi})$ can be derived from $(X,f)$ by a partial conjugation along the first factor $\W{P}$ in this factorization, $\eu(X_{\phi})= \eu(X)=4$, $\sigma(X_{\phi})=\sigma(X)=-4$, and 
$c_1^2(X_{\phi})= c_1^2(X)=-4$. By Lemma~\ref{SCYpencil}, we conclude that $X_{\phi}$ is a minimal symplectic Calabi-Yau surface, provided it is not rational or ruled.

From $2 - 2 b_1(X_{\phi})+ b^+(X_{\phi}) + b^-(X_{\phi}) = \eu(X_{\phi}) = 4$ and  $b^+(X_{\phi})-b^-(X_{\phi}) = \sigma(X_{\phi}) = -4$ we see that $b^+(X_{\phi})=1$ only if $b_1(X_{\phi})=2$. So the only rational or ruled surface with these characteristic numbers can be $S^2 \x T^2$ or $S^2 \W{\x} T^2$. However, by the previous lemma, neither one of these surfaces admit a genus-$3$ pencil, so any $X_{\phi}$ is a symplectic Calabi-Yau surface.

To prove the second part of the theorem, consider genus-$3$ pencils $(X_{\phi}, f_{\phi})$ with $\phi= t_{b_1}^{-m_1} t_{a_3}^{m_2}$ (where $b_1$ and $a_3$ are as in Figure~\ref{pi1generators}). Note that $b_1$ and $a_3$ are disjoint from $C_1, C_2, C'_1, C'_2$, so $\phi$ fixes all. For $m= (m_1, m_2) $ any pair of non-negative integers, we denote this pencil by $(X_m, f_m)$, and its monodromy factorization by $W_m= \W{P}^\phi \W{P}'$, $\phi= t_{b_1}^{-m_1} t_{a_3}^{m_2}$.

As we observed in the previous section, every triple of vanishing cycles $\{B_0, B_1, B_2\}$, $\{A_0, A_1, A_2\}$, $\{B'_0, B'_1, B'_2\}$, $\{A'_0, A'_1, A'_2\}$ give the same three relations in $H_1$:
\begin{eqnarray}
& \phantom{o} &  a_1 +  a_3= 0,   \label{varyingab1}   \\
& \phantom{o} &  a_1  -b_1 + b_2 + a_3 - b_3= 0,     \label{varyingab2}  \\
& \phantom{o} & b_1 -b_2 +b_3 =0,  \label{varyingab3}
\end{eqnarray}
where the first and the third relations imply the second. We can draw two conclusions: First, the vanishing cycles coming from the non-conjugated factor $\W{P}'$ induce exactly these relations in $H_1(X_m)$. Second, the vanishing cycles coming from the conjugated factor $\W{P}^{\phi}$ induce the following relations we can easily calculate using the Picard-Lefschetz formula:
\begin{eqnarray}
& \phantom{o} &  a_1 + m_1 b_1 +  a_3= 0,  \label{varyingab4}  \\
& \phantom{o} &  a_1 + m_1 b_1  -b_1 + b_2 + a_3 - m_2 a_3 - b_3= 0,    \label{varyingab5}  \\
& \phantom{o} & b_1 -b_2 + m_2 a_3+ b_3  =0 \, .  \label{varyingab6}
\end{eqnarray}
We easily see that $(\ref{varyingab1})$ and $(\ref{varyingab4})$ imply $m_1 b_1=0$, and $(\ref{varyingab3})$ and $(\ref{varyingab6})$ imply $m_2 a_3=0$, whereas the others are already implied by these relations. From $a_3= -a_1$ and $b_3= b_2-b_1$, we conclude that $H_1(X_m)$ is generated by $a_1, b_1, a_2, b_2$ with $m_1 b_1=0$ and $m_2 a_1 = 0$. Hence
$H_1(X_m) \cong \Z^2 \oplus (\Z \, / {m_1 \, \Z}) \oplus (\Z \, / {m_2 \, \Z})$, as promised.
\end{proof} 

\smallskip
\begin{remark} \label{CompareSmith}
The very first question here is whether or not every $X_{\phi}$ is a torus bundle over a torus, as they are commonly conjectured to exhaust the list of all symplectic Calabi-Yau surfaces with $b_1 >0$. If for any $\phi$, $\pi_1(X_{\phi})$ is not a $4$-dimensional solvmanifold group \cite{Hillman, FriedlVidussi}, this would imply that $X_{\phi}$ is \emph{not} a torus bundle over a torus, and it is a new symplectic Calabi-Yau surface.  As our arguments in the proof of Theorem~\ref{SCYfamily} show, if any partial conjugation along any Hurwitz equivalent factorization to $W_{\phi}$ results in a pencil with a fundamental group which is not a solvmanifold group, we can arrive at a similar conclusion as well. I do not know for the moment if either one of these can be realized or ruled out for all possible $\phi$.

It is worth comparing our pencils with those on torus bundles over tori. In \cite{SmithTorus} Ivan Smith constructs genus-$3$ pencils on torus bundles \emph{admitting sections} (not all do), by generalizing the algebraic geometric construction of holomorphic genus-$3$ pencils on abelian surfaces. A natural question is whether or not our examples overlap with Smith's. (Well, if they do, the explicit factorizations $W_{\phi}$ can be seen to prescribe very nice handle decompositions for such torus bundles.)

Note that that any torus bundle over a torus with a section $S$ would admit a second disjoint section as well; for any section of a surface bundle over a torus has self-intersection zero \cite{BaykurKorkmazMonden} and can be pushed-off of itself. So any genus-$3$ Lefschetz fibration of Smith would be determined by a pair $\phi_1, \phi_2 \in \Gamma_1^2$ \emph{subject to} $[\phi_1, \phi_2]=1$.  

Now for a comparison, let us note that the stabilizer subgroup of $\Gamma_3^4$ which fix $C_1$, $C_2$, $C'_1$, $C'_2$, is generated by Dehn twists along curves which do not intersect the stabilized collection, and the obvious involution swapping the genus one surfaces bounded by $C_1, C'_2$ and by $C'_1, C_2$. (This involution maps each $A_j, B_j, A'_j, B'_j$ to itself!)  So any $\phi$ in Theorem~\ref{SCYfamily} can be expressed as $\phi= t_{z_k} \cdots t_{z_1}$, where each $z_i$, for $i=1, \ldots, k$, is either contained in the subsurface $F_1 \cong \Gamma_1^2$ in $\Gamma_3^4$ bounded by $C_1$ and $C'_2$, or  $F_2 \cong \Gamma_1^2$ bounded by $C_2$ and $C'_1$. Clearly these have disjoint supports, so we can rearrange the indices so that $\phi = \phi_1 \phi_2$, where each $\phi_i$ is compactly supported on $F_i$. Hence we see that $(X_{\phi}, f_{\phi})$ is determined by a pair of mapping classes $\phi_1, \phi_2 \in \Gamma_1^2$, \emph{but with no relation to each other whatsoever!}. 
\end{remark}

\begin{remark} \label{CompareHoLi}
As observed in the previous remark, for each $\phi$ in Theorem~\ref{SCYfamily}, we have factorization $\phi= t_{z_k} \cdots t_{z_1}$, where $z_i$ are disjoint from $C_1, C_2, C'_1, C'_2$. So
 \[ W_{\phi}= \W{P}^{\phi} \W{P}' = ( \phi \W{P} \phi^{-1}) \, \W{P}' = (t_{z_k} ( \cdots (t_{z_1} \W{P} t^{-1}_{z_1}) \cdots) t^{-1}_{z_k})  \, \W{P}' \]
is obtained by a \emph{sequence} of partial conjugations by $t_{z_1}, \ldots, t_{z_k}$, which amounts to performing fibered Luttinger surgeries along Lagrangian tori or Klein bottles that are swept off by $z_i$ on the fibers over a loop on the base \cite{Auroux, BaykurLuttingerLF}. The reason we can break down the conjugation of $\W{P}$ by $\phi$ to a sequence of partial conjugations by Dehn twists is that at each step the conjugated mapping class \,$t_{z_{i}}\cdots t_{z_1} \W{P}\,t^{-1}_{z_1} \cdots t^{-1}_{z_{i}}$ indeed commutes with $t_{z_{i+1}}$. This is easy to see once we recall that as a mapping class $\W{P}= t_{C}^{-2} t_{C'}^2$ and that each $z_i$ is disjoint from $C$ and $C'$. The mapping class  $t_{z_{i}}\cdots t_{z_1} \W{P}\,t^{-1}_{z_1} \cdots t^{-1}_{z_{i}} =  t_{C}^{-2} t_{C'}^2$ certainly commutes with $t_{z_{i+1}}$.

Moreover, each $z_i$ is supported away from $\partial_1, \ldots, \partial_4$, so these surgeries can be viewed in the minimal models $X_{\phi}$. Since Luttinger surgery do not change $\omega$ away from these Lagrangians and since the canonical class $K$ can be supported away from them \cite{ADK}, $K^2$ or $K \cdot [\omega]$ do not change under these surgeries. Thus the symplectic Kodaira dimension does not change under these surgeries. This gives an alternative proof (without using Lemma~\ref{ruledno}) of all $(X_{\phi}, f_{\phi})$ being symplectic Calabi-Yau surfaces, since all are obtained from the symplectic Calabi-Yau $(X,f)$ we constructed in the previous section by partial conjugations. 

With this said, we observe that if our family $\{ X_{\phi} \}$ coincides with torus bundles over tori, we immediately get a proof of an improved version of a conjecture by Chung-I Ho and Tian-Jun Li: that every symplectic torus bundle over a torus can be obtained from the $4$-torus via Luttinger surgeries along tori \cite{HoLi}, or Klein bottles, as we add. (This conjecture is easier to verify for \emph{Lagrangian torus bundles} over tori, which however only constitute a subfamily of torus bundles with $b_1 \geq 3$.) All our examples are clearly obtained from $X$, our symplectic Calabi-Yau surface homeomorphic to the $4$-torus, via Luttinger surgeries. 
\end{remark}


\smallskip
\subsection{The upper bound on the first Betti number of a genus-$g$ fibration} \ \label{Sec:bound}

In \cite{KorkmazProblems}, Korkmaz asked if it is true that $b_1(X) \leq g$ for any non-trivial  genus-$g$ Lefschetz fibration $(X,f)$, noting that this is the case for all the examples he knows of ---with a single counter-example provided by Smith's genus-$3$ fibration on $T^4 \# 4 \CPb$. Here we will sketch a generalizing of our construction of a genus-$3$ Lefschetz pencil $(X,f)$ with $b_1=4$ in Section~\ref{Sec:4torus} to higher genera, which provides a negative answer to Korkmaz's question, for all odd $g> 1$. 

\begin{figure}[ht!]
 \centering
     \includegraphics[width=8cm]{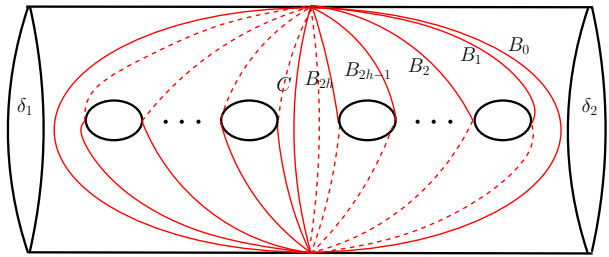}
     \caption{Vanishing cycles $B_j, C$ in Hamada's lift of the generalized Matsumoto fibration of genus $2h$.}
\vspace{0.2in}
     \label{GeneralizedMatsumoto}
\end{figure}

Our input will be the generalization of Matsumoto's genus-$2$ fibration constructed by Korkmaz himself \cite{Korkmaz} (and also by Cadavid \cite{Cadavid}): for every $h>0$, there is a genus-$2h$ Lefschetz fibration on the ruled surface $S^2 \x \Sigma_h \# 4 \CPb$. As shown by Hamada, the monodromy factorization of these can be lifted to $\Gamma_{2h}^2$ to obtain:
\begin{equation}
(t_{B_0} t_{B_1} t_{B_2} \cdots t_{B_{2h-1}} t_{B_{2h}} \, t_C )^2  t^{-1}_{\delta_1} t^{-1}_{\delta_2} = 1 \, ,
\end{equation}
where the curves $B_0, \ldots, B_{2h}, C$ and the boundary parallel curves $\delta_1, \delta_2$ are as shown in Figure~\ref{GeneralizedMatsumoto}. Embed this relation into $\Gamma_g$, for $g=2h+1$, by gluing along the two boundary components $\delta_1$ and $\delta_2$ of $\Sigma_{2h}^2$. We can also embed it by rotating the resulting surface by $180$ degrees as before. The product of the two factorizations we get in $\Gamma_{g}$ through these two embeddings prescribes a fiber sum decomposable \textit{achiral} genus-$g$ Lefschetz fibration. Removing pairs of matching Dehn twists with opposite signs (coming from the two embeddings of $t^2_C$ and $t_{\delta_1}= t_{\delta_2}$) gives us a genus-$g$ Lefschetz fibration $(\W{X_h},\W{f_h})$ with $H_1(\W{X_h}) \cong \Z^{2h+2}$, which can be seen from a straightforward calculation as before. Moreover, similar to our refined construction in Theorem~\ref{SCYfamily}, we could the further lifts of the Cadavid-Korkmaz fibration in $\Gamma_{2h}^4$ to obtain a pencil $(X_h, f_h)$ with $4$ base points, blowing-up which yields $(\W{X_h},\W{f_h})$. We have $b_1(X_h)=b_1(\W{X_h})= 2h+2=g+1 \nleq g$.  

With our marginal examples in mind, it seems reasonable to revamp Korkmaz's estimate and ask if every non-trivial Lefschetz pencil $(X,f)$ has $b_1(X) \leq g+1$. We can thus ask:

\begin{question} \label{b1RefinedQuestion}
Is the first Betti number $b_1(X)$ of any non-trivial genus-$g$ Lefschetz pencil $(X,f)$ bounded above by \,$g+1$? What is the supremum for $b_1(X)$ of all genus-$g$ Lefschetz pencils $(X,f)$ for a given $g$?
\end{question}

\noindent It has been known for a while by now that $b_1(X) \leq 2g-1$ for any non-trivial \linebreak genus-$g$ Lefschetz pencil/fibration $(X,f)$ \cite{ABKP}. The first part of Question~\ref{b1RefinedQuestion} asks if $g+1$ can be a sharper bound, still linear in $g$. The  second part of Question~\ref{b1RefinedQuestion}, which is formulated in the most general way, can already be answered for low genera pencils: for $g=1$ this supremum is clearly $0$, and it is $2$ for $g=2$, which can be easily shown using the Kodaira dimension (invoking \cite{SatoKodaira}, as in Remark~\ref{smallest}). For $g=3$, we conjecture that the supremum is indeed $4=g+1$. 

Note that, the analogous question for the supremum for the second Betti number $b_2(X)$ of \emph{relatively minimal} genus-$g$ \emph{pencils} $(X,f)$ is now fully answered. It is $11$ for $g=1$, realized by the pencil on $\CP \# \, 8 \CPb$; $37$ for $g=2$, following from Smith's analysis of genus-$2$ pencils in \cite{SmithGenus3}; whereas the supremum does \emph{not} exist when $g \geq 3$, which is implied by the arbitrarily long positive factorizations of the boundary multi-twist produced in \cite{BaykurVHM, DalyanKorkmazPamuk, BaykurMondenVHM}. However, constructions of fixed genus pencils with large $b_2$ is rather antagonistic to that of fixed genus pencils with large $b_1$: larger $b_2$ means more vanishing cycles (typically) killing $b_1$.

\newpage



\begin{thebibliography}{99999}

\bibitem{ABP} A. Akhmedov, R. I. Baykur and D. Park, \textit{Constructing infinitely many smooth structures on small $4$-manifolds,} J. Topol. 1 (2008), no. 2, 409–-428.

\bibitem{AP} A. Akhmedov and B. D. Park, \textit{Exotic smooth structures on small $4$-manifolds with odd signatures,} Invent. Math. 181 (2010), no. 3, 577–-603.

\bibitem{ABKP} J. Amoros, F. Bogomolov, L. Katzarkov, and T. Pantev. \textit{Symplectic Lefschetz fibrations with arbitrary fundamental groups,} with an appendix by I. Smith,  J. Differential Geom. vol 54 (2000): 489–-545.

\bibitem{Arakelov} S.\,Ju. Arakelov, \textit{Families of algebraic curves with fixed degeneracies,} Izv. Akad. Nauk. SSSR Ser. Mat. 35 (1971) 1269--1293.

\bibitem{Auroux} D. Auroux,  \textit{Mapping class group factorizations and symplectic $4$-manifolds: some open problems,} Problems on mapping class groups and related topics, 123--132, Proc. Sympos. Pure Math. 74, Amer. Math. Soc., Providence, RI, 2006.

\bibitem{ADK} D. Auroux, S. K. Donaldson and L. Katzarkov, \textit{Luttinger surgery along Lagrangian tori and non-isotopy for singular symplectic plane curves,} Math. Ann. 326 (2003), no. 1, 185–-203. 

\bibitem{BK} S. Baldridge and P. Kirk, \textit{A symplectic manifold homeomorphic but not diffeomorphic to $\CP \# 3 \CPb$,}  Geom. Topol. 12, 919–-940. 

\bibitem{Bauer} S. Bauer, \textit{Almost complex $4$-manifolds with vanishing first Chern class,} J. Differential Geom. 79 (2008), no. 1, 25--32. 

\bibitem{BaykurPAMS} R. I. Baykur, \textit{Minimality and fiber sum decompositions of Lefschetz fibrations}, Proc. Amer. Math. Soc.; proc/12835 (2015). 


\bibitem{BaykurHolomorphic} R. I. Baykur, \textit{Non-holomorphic surface bundles and Lefschetz fibrations,} Math. Res. Lett. 19 (2012), no. 3, 567–-574. 

\bibitem{BaykurLuttingerLF} R. I. Baykur, \textit{Inequivalent Lefschetz fibrations and surgery equivalence of symplectic \linebreak $4$-manifolds,} to appear in J. Symplectic Geom. ; http://arxiv.org/abs/1408.4869.

\bibitem{BaykurHayano} R. I. Baykur and K. Hayano, \textit{Multisections of Lefschetz fibrations and topology of symplectic $4$-manifolds,} to appear in Geom. Topol.; http://arxiv.org/abs/1309.2667.

\bibitem{BaykurKorkmaz} R. Inanc Baykur and M. Korkmaz, \textit{Small Lefschetz fibrations and exotic $4$-manifolds}, preprint; http://arxiv.org/abs/1510.00089.  

\bibitem{BaykurKorkmazGenus3} R. Inanc Baykur and M. Korkmaz, \text{An interesting genus-$3$ Lefschetz fibration}. 

\bibitem{BaykurKorkmazMonden} R. I. Baykur, M. Korkmaz and N. Monden, \textit{Sections of surface bundles and Lefschetz fibrations}, Trans. Amer. Math. Soc. 365 (2013),  no. 11, 5999--6016. 

\bibitem{BaykurMondenVHM} R. I. Baykur, N. Monden and M. Korkmaz, \textit{Positive factorizations of mapping classes,} preprint; http://arxiv.org/abs/1412.0352. 

\bibitem{BaykurSunukjian} R. I. Baykur and N. Sunukjian, \textit{Round handles, logarithmic transforms, and smooth $4$-manifolds,} J. Topol., Vol. 6 No 1 (2013), 49--63.

\bibitem{BaykurVHM} R. I. Baykur and J. Van Horn-Morris, \textit{Topological complexity of symplectic $4$-manifolds and Stein fillings,} to appear in J. Symplectic Geom.

\bibitem{Cadavid} C. Cadavid, \textit{A remarkable set of words in the mapping class group,} Dissertation, Univ. of Texas, Austin, 1998.

\bibitem{DalyanKorkmazPamuk} E. Dalyan, M. Pamuk and M. Korkmaz, \textit{Arbitrarily long factorizations in mapping class groups,} to appear in Int. Math. Res. Notices.

\bibitem{Donaldson87} S. K. Donaldson, \textit{Irrationality and the $h$-cobordism conjecture,} J. Differential Geom. 26 (1987), 141–-168.

\bibitem{Donaldson} S. K. Donaldson, \textit{Lefschetz pencils on symplectic manifolds}, J. Differential Geom. vol. 53 (1999), no.2, 205--236.

\bibitem{Donaldson_MCG} S. K. Donaldson, \textit{Lefschetz pencils and mapping class groups,} Problems on mapping class groups and related topics, 151–-163, Proc. Sympos. Pure Math., 74, Amer. Math. Soc., Providence, RI, 2006. 

\bibitem{Donaldson_SCY}  S. K. Donaldson, \textit{Some problems in differential geometry and topology,} Nonlinearity 21 (2008), no. 9, T157--T164.

\bibitem{Dorfmeister} J. Dorfmeister, \textit{Kodaira dimension of fiber sums along spheres,} Geom. Dedicata, March (2014), DOI 10.1007/s10711-014-9974-2.

\bibitem{Endo} H. Endo, \textit{Meyer's signature cocycle and hyperelliptic fibrations}, Math. Ann., 316 (2000), 237--257.



\bibitem{FJ90} F. T. Farrell and L. E. Jones, \textit{Classical aspherical manifolds,} CBMS Regional Conference Series in Mathematics, 75. Published for the Conference Board of the Mathematical Sciences, Washington, DC; by the American Mathematical Society, Providence, RI (1990).

\bibitem{FSrationalblowdown}  R. Fintushel and R. Stern,  \textit{Rational blowdowns of smooth $4$-manifolds,} J. Differential Geom. 46 (1997), no. 2, 181--235.

\bibitem{FSKnotsurgery} R. Fintushel and R. Stern, \textit{Knots, links, and $4$-manifolds}, Invent. Math. vol. 134 (1998), no. 2, 363--400.


\bibitem{FSDoublenode} R. Fintushel and R. J. Stern, \textit{Double node neighborhoods and families of simply connected $4$-manifolds with $ b^+=1$,} {J. Amer. Math. Soc.} 
{19} (2006), 171--180.

\bibitem{FSReverseEngineering} R. Fintushel, B. D. Park and R. Stern, \textit{Reverse engineering small $4$-manifolds,} Algebr. Geom. Topol. 7 (2007) 2103-–2116.

\bibitem{FSPinwheels} R. Fintushel and R. Stern, \textit{Pinwheels and nullhomologous surgery on $4$-manifolds with $b^+=1$,} Algebr. Geom. Topol. 11 (2011), no. 3, 1649–-1699. 


\bibitem{Freedman} M. H. Freedman, \textit{The topology of four-dimensional manifolds}, J. of Differential Geometry. 17 (3): 357–-453.


\bibitem{FriedlVidussi} S. Friedl and S. Vidussi, \textit{On the topology of symplectic Calabi-Yau $4$-manifolds},  J. Topol. 6 (2013), no. 4, 945–-954. 

\bibitem{Friedman}  R. Friedman, \textit{Vector bundles and\/ ${\rm SO}(3)$-invariants for elliptic surfaces,}  J. Amer. Math. Soc. {8} (1995), 29--139. 

\bibitem{FriedmanMorgan} R. Friedman and J. W. Morgan, \textit{Smooth four-manifolds and complex surfaces,} Ergeb. Math. Grenzgeb. (3), 27, Springer-Verlag, Berlin, 1994.

\bibitem{Gompf} R. E. Gompf, \textit{A new construction of symplectic manifolds,} Ann. of Math. (2) 142 (1995), no. 3, 527--595. 

\bibitem{GompfStipsicz} R. Gompf and A. Stipsicz, \textit{$4$-Manifolds and Kirby Calculus,} Graduate Studies in Mathematics, vol. 20, American Mathematical Society, Rhode Island, 1999.

\bibitem{Hamada} N. Hamada, \textit{Sections of the Matsumoto-Cadavid-Korkmaz Lefschetz fibration,} preprint. 

\bibitem{HoLi} C.-I. Ho and T.-J. Li, \textit{Luttinger surgery and Kodaira dimension},  Asian J. Math. 16 (2012), no. 2, 299–-318. 

\bibitem{Hillman} J. A. Hillman, \textit{Four-manifolds, geometries and knots.} Geom. \& Topol. Monographs, 5. Geom. \& Topol. Publications, Coventry, 2002. xiv$+379$ pp.

\bibitem{Kas} A. Kas, \textit{On the handlebody decomposition associated to a Lefschetz fibration}, Pacific J. Math. 89 (1980), 89--104.

\bibitem{Korkmaz} M. Korkmaz, \textit{Noncomplex smooth $4$-manifolds with Lefschetz fibrations,} Internat. Math. Res. Notices 2001, no. 3, 115--128. 

\bibitem{Korkmaz2} M. Korkmaz, \textit{Lefschetz fibrations and an invariant of finitely presented groups,} Internat. Math. Res. Notices 2009, no. 9, 1547–1572. 

\bibitem{KorkmazProblems} M. Korkmaz, \textit{Problems on homomorphisms of mapping class groups,} Problems on mapping class groups and related topics, 81–-89, Proc. Sympos. Pure Math., 74, Amer. Math. Soc., Providence, RI, 2006.


\bibitem{Kotschick} D. Kotschick, \textit{On manifolds homeomorphic to $\CP \# 8 \CPb$}, Invent. Math. 95 (1989), 591–-600.

\bibitem{LiSCY} T.-J. Li, \textit{Symplectic $4$-manifolds with Kodaira dimension zero,} J. Differential Geom. 74 (2006), no. 2, 321--352.

\bibitem{LiKodairaSurvey} T.-J. Li, \textit{The Kodaira dimension of symplectic $4$-manifolds,} Floer homology, gauge theory, and low-dimensional topology, 249--261, Clay Math. Proc., 5, Amer. Math. Soc., Providence, RI, 2006. 

\bibitem{LiQuaternionic}  T.-J. Li, \textit{Quaternionic vector bundles and Betti numbers of symplectic $4$-manifolds with Kodaira dimension zero,} Int. Math. Res. Not. 2006, 1--28.

\bibitem{LiSCYSurvey} T.-J. Li, \textit{Symplectic Calabi-Yau surfaces,} Handbook of geometric analysis, No. 3, 231--356, Adv. Lect. Math. (ALM), 14, Int. Press, Somerville, MA, 2010.

\bibitem{LiLiu} T.-,J. Li and A. Liu, \textit{Symplectic structures on ruled surfaces and a generalized adjunction formula,} Math. Res. Lett. 2 (1995), 453--471.



\bibitem{Matsumoto} Y. Matsumoto, \textit{Lefschetz fibrations of genus two - a topological approach} -, Proceedings of the 37th Taniguchi Symposium on Topology and Teichm\"{u}ller Spaces, (S. Kojima, et. al., eds.), World Scientific, 1996, 123--148.

\bibitem{Moishezon} B. Moishezon, \textit{Complex surfaces and connected sums of complex projective planes}, Lecture Notes in Math. 603, Springer-Verlag, 1977.

\bibitem{Ozbagci} B. Ozbagci, \textit{Signatures of Lefschetz fibrations}, Pacific J. Math. 202 (2002), no.1, 99--118.

\bibitem{OzbagciStipsicz} B. Ozbagci and A. Stipsicz, \textit{Noncomplex smooth $4$-manifolds with genus-$2$ Lefschetz fibrations,} Proc. Amer. Math. Soc. 128 (2000), no. 10, 3125–-3128. 

\bibitem{OzbagciStipsicz2} B. Ozbagci and A. Stipsicz, \textit{Contact $3$-manifolds with infinitely many Stein fillings,} Proc. Amer. Math. Soc. 132 (2004), no. 5, 1549–-1558.

\bibitem{ParkJ} J. Park, \textit{Simply connected symplectic $4$-manifolds with $b^+_2 = 1$ and $c_1^2 = 2$}, Invent. Math. 159 (2005), 657-–667.

\bibitem{ParkD} B. D. Park, \textit{Exotic smooth structures on $3 \CP \# n \CPb$,} Proc. Amer. Math. Soc. 128 (2000), 3057–-3065. 

\bibitem{Parshin} A. N. Parshin, \textit{Algebraic curves over function fields, I,} Izv. Akad. Nauk. SSSR Ser. Mat. 32 (1968) 1191--1219.

\bibitem{SatoKodaira} Y. Sato, \textit{Canonical classes and the geography of nonminimal Lefschetz fibrations over $S^2$}, Pacific J. Math., 262 (2013), no. 1, 191--226.

\bibitem{SmithGenus3} I. Smith, \textit{Lefschetz pencils and divisors in moduli space,} Geom. Topol. 5 (2001), 579-–608.

\bibitem{SmithTorus} I. Smith, \textit{Torus fibrations on symplectic four-manifolds,} Turk. J. Math. 25 (2001) , 69--95.

\bibitem{Stern}  R. Stern, \textit{Will we ever classify simply-connected smooth 4-manifolds?}, Floer homology, gauge theory, and low-dimensional topology, 225--239, Clay Math. Proc., 5, Amer. Math. Soc., Providence, RI, 2006. 


\bibitem{StipsiczFiberSum} A. Stipsicz, \textit{Indecomposability of certain Lefschetz fibrations,} Proceedings of the American Mathematical Society 129 (2001), no. 5, 1499--1502.


\bibitem{StipsiczSzabo} A. I. Stipsicz and Z. Szab\'{o}, \textit{An exotic smooth structure on $\CP\#6\CPb$,} Geom. Topol. {9} (2005), 813--832.

\bibitem{Symington} M. Symington, \textit{Symplectic rational blowdowns,}  J. Differential Geom. 50 (1998), no. 3, 505-–518. 

\bibitem{Szabo}  Z. Szab\'{o}, \textit{Exotic $4$-manifolds with $b_2 ^+ =1$,} Math. Res. Lett. {3} (1996), 731--741.  

\bibitem{Taubes} C. H. Taubes, \textit{The Seiberg--Witten and Gromov invariants,} Math. Res. Lett. 2:2 (1995), 221--238.

\bibitem{Taubes2} C. H. Taubes, \textit{SW $\Rightarrow$ Gr: From the Seiberg-Witten equations to pseudo-holomorphic curves}, Journal of the AMS, 9 (1996), 845--918.

\bibitem{UsherDS}  M. Usher, \textit{The Gromov invariant and the Donaldson-Smith standard surface count,} Geom. Topol. 8 (2004), 565–-610. 

\bibitem{Usher} M. Usher, \textit{Minimality and symplectic sums,} Int. Math. Res. Not. 2006, Art. ID 49857, 17 pp. 

\bibitem{UsherKodaira} M. Usher, \textit{Kodaira dimension and symplectic sums,} Comment. Math. Helv. 84 (2009), no. 1, 57--85.
\end{thebibliography}
\end{document}